\theoremstyle{plain}
\newtheorem{theorem}[subsection]{Theorem}
\newtheorem{lemma}[subsection]{Lemma}
\newtheorem{proposition}[subsection]{Proposition}
\newtheorem{corollary}[subsection]{Corollary}
\theoremstyle{definition}
\theoremstyle{remark}
\newtheorem{remark}[subsection]{Remark}
\newenvironment{tfae}{
\begin{enumerate}}{\end{enumerate}}
\newcommand{\A}{\ensuremath{\mathbb{A}}}
\newcommand{\B}{\ensuremath{\mathbb{B}}}
\newcommand{\Cc}{\ensuremath{\mathsf{C}}}
\newcommand{\Ord}{\ensuremath{\mathsf{Ord}}}
\newcommand{\Rel}{\ensuremath{\mathsf{Rel}}}
\newcommand{\Gph}{\ensuremath{\mathsf{Gph}}}
\def\mathrmdef#1{\expandafter\def\csname#1\endcsname{{\rm#1}}}
\def\pullback{
 \ar@{-}[]+R+<6pt,-1pt>;[]+RD+<6pt,-6pt>%
 \ar@{-}[]+D+<1pt,-6pt>;[]+RD+<6pt,-6pt>}
\def\ophalfsplitpullback{%
 \ar@{-}[]+R+<6pt,-1pt>;[]+RD+<6pt,-6pt>%
 \ar@{-}[]+D+<.5ex,-6pt>;[]+RD+<6pt,-6pt>}
\begin{document}

\title{Lax comma categories of ordered sets}
\author{Maria Manuel Clementino}
\address{University of Coimbra, CMUC, Department of Mathematics, 3000-143 Coimbra, Portugal}\thanks{}
\email{mmc@mat.uc.pt}

\author{Fernando Lucatelli Nunes}
\address{Departement Informatica, Universiteit Utrecht, Nederland \& University of Coimbra,
CMUC, Department of Mathematics, Portugal}
\thanks{The research was supported through the programme “Oberwolfach Leibniz Fellows” by the Mathematisches Forschungsinstitut Oberwolfach in 2022, and partially supported by the Centre for Mathematics of the University of Coimbra - UIDB/00324/2020, funded by the Portuguese Government through FCT/MCTES}
\email{f.lucatellinunes@uu.nl}

\keywords{effective descent morphisms, lax comma $2$-categories, comma categories,  exponentiability, cartesian closed categories, topological functors, enriched categories, $\Ord$-enriched categories}

\subjclass{06A07, 18A25, 18A30, 18N10, 18D20, 18E50}

\begin{abstract}
Let $\Ord $ be the category of (pre)ordered sets.
Unlike $\Ord/X$, whose behaviour is well-known, not much can be found in the literature about the lax comma 2-category $\Ord//X$. In this paper we show that  the forgetful functor $\Ord//X\to \Ord$ is topological if and only if $X$ is complete. Moreover, under suitable hypothesis, $\Ord // X$  is complete and cartesian closed if and only if $X$ is. We end by analysing descent in this category. Namely, when $X$ is complete, we show that, for a morphism in $\Ord//X$, being pointwise effective for descent in $\Ord$ is sufficient, while being effective for descent in $\Ord$ is necessary, to be effective for descent in $\Ord//X$.
\end{abstract}

\maketitle

\section*{Introduction}
Janelidze's Galois theory~\cite{MR1061480, MR1822890} neatly gives a common ground for many Galois-type theories, prominently including Magid's Galois theory of commutative rings, Grothendieck's theory of \'{e}tale covering of schemes, and central extension of groups.
There is a deep connection between Janelidze's Galois theory and factorization systems~\cite{JanK, CJK}.

Motivated by this connection and the theory of \textit{lax orthogonal factorization
	systems}~\cite{MR3545937, MR3708821}, we have started a project  whose aim is to
investigate
two-dimensional extensions of the basic
ideas and results of Janelidze's Galois theory.

It has been noticeable that the so-called \textit{lax comma $2$-categories} play an important role in our work (\textit{c.f.} \cite{CLN}). Although they are quite natural (appearing, for instance, in \cite{ML} and \cite[2.2]{STWW}), it seems that  the literature still lacks a systematic study of their fundamental properties; namely, topologicity, exponentiability, and descent.

Since these properties are essential to our endeavour, we give herein an exposition on  the lax comma $2$-categories of $\Ord$, the $2$-category of ordered sets (also called preordered sets).
We prove that the forgetful functor $\Ord//X\to \Ord $ is topological if and only if $X$ is complete. Moreover, we show that, provided that $X$ has a bottom element, $\Ord//X$ is complete and cartesian closed if and only if $X$ is. We end by analysing descent in this category. Namely, when $X$ is complete, we show that, for a morphism in $\Ord//X$, being pointwise effective for descent in $\Ord$ is sufficient, while being effective for descent in $\Ord$ is necessary, to be effective for descent in $\Ord//X$.

Although further enriched and $2$-dimensional aspects of lax comma objects are
essential to our project (see, for instance, \cite{CLN} for an overall view of our ongoing work's setting), they are not relevant to the present note and, hence, will not be dealt herein.

The main intent of this paper is threefold: (1) give an exposition of lax comma $2$-categories of $\Ord$, showing some of its nice properties; (2) provide background to our future work in descent and Galois theory regarding $\Ord$-enriched categories; (3) give a guiding template for our most general systematic study of lax comma $2$-categories. Finally, we also want to pick the community's attention to the problem of studying lax comma $2$-categories, showing that, even in the case of $\Ord$,  there are still facets to be better explored.

\section{The forgetful functor $U\colon\Ord//X\to X$}\label{sect1:forgetful-functor}

Let $X$ be an ordered set. Here by \emph{order} it is meant a reflexive and transitive binary relation, not necessarily antisymmetric (also called preorder).

The category $\Ord//X$ has as objects monotone maps $a\colon Y\to X$, where $Y$ is an ordered set, and as morphisms $f\colon(Y,a)\to(Z,b)$ monotone maps $f\colon Y\to Z$ such that $a\leq bf$:
\[\xymatrix{Y\ar@{}[rrd]|{\leq}\ar[rd]_a\ar[rr]^f&&Z\ar[ld]^b\\
&X&}\]
with the usual composition. Given two morphisms $f,g\colon(Y,a)\to(Z,b)$,  we say that $f\leq g$ if
$f(y)\leq g(y)$ for all $y\in Y$, that is to say, if $f\leq g$ in $\Ord$. This makes $\Ord // X $ an $\Ord$-enriched category.

The category $\Ord/X$ is a non-full subcategory of $\Ord//X$, having the same objects, and morphisms $f\colon(Y,a)\to(Z,b)$ those morphisms in $\Ord//X$ such that $a=bf$. The $\Ord$-enrichment in $\Ord / X$ is the same as  $\Ord//X$; that is, the inclusion  $\Ord/X\to  \Ord//X $ is locally full.

\textit{These two categories have very different behaviour, as we will see throughout this text.}
We  start  by comparing the two  locally full  forgetful functors of the diagram
\[\xymatrix{\Ord/X\ar[rd]_{\overline{U}}\ar[rr]&&\Ord//X\ar[ld]^U\\
&\Ord&}\]
It is well-known that \emph{the forgetful functor $\overline{U}\colon\Ord/X\to\Ord$ is  $\Ord$-comonadic, and therefore it reflects isomorphisms, and  creates  ($\Ord$-weighted) colimits and absolute equalizers}. Moreover, the category $\Ord/X$ is complete but $\overline{U}$ does not preserve limits in general; indeed, it preserves equalizers and pullbacks but not products: in $\Ord/X$ the terminal object is $1_X\colon X\to X$, and products are formed via pullbacks.
On the contrary, \emph{the forgetful functor $U\colon\Ord//X\to\Ord$ does not reflect isomorphisms, but in turn it is topological} \cite{H}  whenever $X$ is complete, as we show in the sequel.

We recall that a functor $G\colon\A\to\B$ is \emph{topological} if every family $(f_i\colon B\to GA_i)_{i\in I}$ of $\B$-morphisms (where $I$ may be a proper class) has a $G$-initial lift $(\overline{f}_i\colon A\to A_i)_{i\in I}$, so that $G\overline{f}_i=f_i$ for every $i\in I$ and, for every family $(h_i\colon C\to A_i)_{i\in I}$ of $\A$-morphisms and every $\B$-morphism $t\colon GC\to B$ with $f_i\cdot t=Gh_i$ for every $i\in I$, there exists a unique morphism $\overline{t}\colon C\to A$ in $\A$ with $G\overline{t}=t$ and $\overline{f}_i\,\overline{t}=h_i$ for every $i\in I$.

From now on, for every $Y\in\Ord$ and $x\in X$, we will denote by $x\colon Y\to X$ the constant (monotone) map assigning $x$ to every element of $Y$.

\begin{lemma}
\begin{enumerate}
\item If $X$ has a bottom element $\bot$, then $U\colon\Ord//X\to\Ord$ is a right adjoint.
\item If $X$ has a top element $\top$, then $U\colon\Ord//X\to\Ord$ is a left adjoint.
\end{enumerate}
\end{lemma}

\begin{proof}
It is easily checked that the functor $L\colon\Ord\to\Ord//X$, defined by $L(Y)=(Y,\bot)$ and $L(f)=f$, is a left adjoint for $U$, while $R\colon\Ord\to\Ord//X$, defined by $R(Y)=(Y,\top)$ and $R(f)=f$, is a right adjoint for $U$.
\end{proof}

\begin{theorem}\label{theo:fundamental-theorem}
The forgetful functor $U\colon\Ord//X\to\Ord$ is topological if, and only if, $X$ is a complete ordered set.
\end{theorem}
\begin{proof}
Let us assume that $X$ is complete.
Given a family $(f_i\colon Y\to(Z_i,b_i))_{i\in I}$ of monotone maps, we define $a\colon Y\to X$ by $a(y)=\bigwedge_{i\in I}\,b_i(f_i(y))$. Then, by construction, $a$ is monotone and $a\leq bf$; that is, for each $i\in I$, $f_i\colon(Y,a)\to (Z_i,b_i)$ is a morphism in $\Ord//X$. Moreover, given any family of morphisms $(g_i\colon (W,c)\to(Z_i,b_i))_{i\in I}$ and a monotone map $h\colon W\to Y$ such that $f_i h=g_i$ for every $i$, then it is easily checked that $c\leq ah$, i.e. $h\colon(W,c)\to(Y,a)$ is a morphism in $\Ord//X$ (and clearly the unique whose image under $U$ is $h\colon W\to Y$).

It remains to check that $X$ is complete provided that $U$ is topological. Let $(x_i)_{i\in I}$ be a family of elements of $X$ and consider the identities $(1\to (1,x_i))_{i\in I}$. The $U$-initial structure $x$ on $1$, with respect to this family, is clearly $\bigwedge_{i\in I}\,x_i$ in $X$: so that $(1,x)\to(1,x_i)$ is a morphism, $x\leq x_i$ for every $i$; the universal property of the lifting gives that, if $y\in X$ is such that $y\leq x_i$ for every $i\in I$, then $y\leq x$.
\end{proof}

\begin{corollary}
\begin{enumerate}
\item If $X$ has a bottom element, then $X$ is complete if, and only if, $\Ord//X$ is complete.
\item If $X$ has a top element, then $X$ is complete if, and only if, $\Ord//X$ is cocomplete.
\end{enumerate}
\end{corollary}

\begin{proof}
(1) If $X$ is complete, then the forgetful functor $\Ord//X\to \Ord$ is topological, and therefore $\Ord//X$ is complete since $\Ord$ is.
To prove the converse, let $(x_i)_{i\in I}$ be a family of elements of $X$, and let $(Y,a)$ be the product of $(x_i\colon 1\to X)_{i\in I}$ in $\Ord//X$. Then $Y=U(Y,a)$ is the product of $(U1)_{i\in I}$ in $\Ord$, that is $Y$ is a singleton $\{y\}$. The universal property of the product in $\Ord//X$ means exactly that $a(y)$ is the infimum in $X$ of the family $(x_i)_{i\in I}$.

 The proof of (2) is analogous.
\end{proof}


\emph{From now on, $X$ is a complete ordered set.}

 We find it worth to describe how limits and colimits are built in $\Ord//X$. Given a family $(X_i,a_i)_{i\in I}$ of objects of $\Ord//X$, the structure $a\colon\prod X_i\to X$ in the product $\prod X_i$ is defined by $a\left( (x_i)_i\right) =\bigwedge_ia_i(x_i)$, while the structure in its coproduct $\coprod X_i$ is given by $b\colon \coprod X_i\to X$, with $b(y)=a_i(y)$ when $y\in X_i$. Equalisers are built as expected: given morphisms $f,g\colon (Y,a)\to (Z,b)$, its equaliser is $m\colon(M=\{y\in Y\,;\,f(y)=g(y)\},\widehat{a})\to(Y,a)$, where $\widehat{a}$ is the restriction of $a$ to $M$. Coequalisers are given by Kan extensions, as we show next. For this result we only need that $X$ has a top element.

\begin{lemma}
Given morphisms $f,g,h$ in $\Ord//X$ as in the diagram
\[\xymatrix{Y\ar[dr]_a\ar@<2pt>[r]^f\ar@<-2pt>[r]_g&Z\ar@{}[rd]|(0.35){\leq}\ar[r]^h\ar[d]^b&W\ar[dl]^c\\
\ar@{}[ru]|(0.65){\leq}&X&}\]
h is the coequaliser of $f,g$ in $\Ord//X$ if, and only if:
\begin{enumerate}
\item[(1)] $h$ is the coequaliser of $f,g$ in $\Ord$;
\item[(2)] $c$ is the left Kan extension of $b$ along $h$.
\end{enumerate}
\end{lemma}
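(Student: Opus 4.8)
The plan is to prove the equivalence by exploiting the fact that $U\colon\Ord//X\to\Ord$ is topological (Theorem~\ref{theo:fundamental-theorem}), so colimits in $\Ord//X$ are computed by taking the colimit in $\Ord$ and equipping it with the \emph{final} structure over $X$ with respect to the colimit cocone. Concretely, for the parallel pair $f,g\colon(Y,a)\to(Z,b)$, I would first form the coequaliser $h\colon Z\to W$ in $\Ord$; this takes care of condition~(1), since the forgetful functor must create the underlying coequaliser. The content of the lemma is then to identify, for this fixed $h$, the final lift $c\colon W\to X$ and to recognise it as the right Kan extension $\ran_h b$. Recall that since $X$ is complete and $h$ is a map in $\Ord$, the pointwise right Kan extension exists and is given by $(\ran_h b)(w)=\bigwedge\{\,b(z)\mid z\in Z,\ h(z)\le w\,\}$.

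The key computation is to check that this formula indeed gives the final structure. First I would verify that $c=\ran_h b$ makes $h\colon(Z,b)\to(W,c)$ a morphism in $\Ord//X$, i.e. that $b\le c\circ h$; this is immediate from the defining formula since $h(z)\le h(z)$. Next, the universal property of the topological lift: given any $(W,c')$ and a monotone map $k\colon W\to X'$... more precisely, given a morphism $p\colon(Z,b)\to(V,d)$ in $\Ord//X$ that coequalises $f,g$, it factors as $p=q\circ h$ with $q\colon W\to V$ monotone (by the $\Ord$-coequaliser property from~(1)), and I must show $q$ is automatically a morphism $(W,c)\to(V,d)$, that is $c\le d\circ q$. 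Since $p$ is a morphism we have $b(z)\le d(p(z))=d(q(h(z)))$ for all $z$; for $w\in W$, taking the meet over all $z$ with $h(z)\le w$ and using monotonicity of $d\circ q$ gives $c(w)=\bigwedge_{h(z)\le w}b(z)\le\bigwedge_{h(z)\le w}d(q(h(z)))\le d(q(w))$, using that $h(z)\le w$ forces $q(h(z))\le q(w)$ and hence $d(q(h(z)))\le d(q(w))$ for each such $z$ — wait, I need $w$ itself to be realised, which follows because $h$ is surjective on objects up to the preorder, or more carefully because $w=h(z_0)$ for some $z_0$ (coequalisers in $\Ord$ are surjective), so $c(w)\le b(z_0)$... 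I would actually argue $c(w)\le d(q(w))$ directly. This establishes that $c$ is below every competing structure making $h$ a morphism that lifts a coequalising cocone, hence $c$ is the final structure, hence $h\colon(Z,b)\to(W,c)$ is the coequaliser in $\Ord//X$.

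For the converse direction, I would argue that if $h$ is the coequaliser in $\Ord//X$ with apex structure $c$, then by topologicity the underlying map is the coequaliser in $\Ord$ (giving~(1)) and $c$ is the final structure, which by the argument above coincides with $\ran_h b$ (giving~(2)); uniqueness of the final lift and uniqueness of colimits make this precise. Alternatively, and perhaps more cleanly, I would prove both directions simultaneously by showing that $h\colon(Z,b)\to(W,c)$ satisfies the universal property of the coequaliser in $\Ord//X$ \emph{if and only if} (1) and (2) hold, testing against the two separate universal properties (one in $\Ord$, one for the Kan extension) that together assemble the universal property in $\Ord//X$. The main obstacle I anticipate is bookkeeping around the direction of the order and the precise shape of the pointwise Kan extension formula — in particular making sure the meet in the definition of $\ran_h b$ ranges over the correct comma-type set $\{z\mid h(z)\le w\}$ rather than $\{z\mid h(z)=w\}$ or $\{z\mid w\le h(z)\}$, since $W$ is only a preorder and $h$ need not be order-reflecting — and in checking that this formula genuinely lands in $X$ (which uses completeness of $X$) and is monotone in $w$. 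Everything else is a routine diagram chase.
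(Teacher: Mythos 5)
Your overall strategy is sound and close in spirit to the paper's: use topologicity of $U$ to reduce the coequaliser in $\Ord//X$ to the coequaliser in $\Ord$ plus the identification of the apex structure $c$ via its universal property. The paper actually avoids any pointwise formula: it characterises $c$ abstractly as the least $c'$ with $b\leq c'h$ (that is what it means here by $c=\ran_h b$, as its proof makes explicit), and verifies this directly from the universal property of the coequaliser in one direction and feeds it back into the factorisation $t h=h'$ in the other. Your insistence on a concrete pointwise formula is where the argument goes wrong.

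The formula you write down, $c(w)=\bigwedge\{\,b(z)\mid h(z)\le w\,\}$, is not the structure the lemma needs, and your claimed verification that $b\le c\circ h$ ``is immediate since $h(z)\le h(z)$'' has the inequality backwards: because $z$ itself lies in the index set of the meet, the formula gives $c(h(z))=\bigwedge_{h(z')\le h(z)}b(z')\le b(z)$, i.e.\ $c\circ h\le b$, which is the wrong direction for $h\colon(Z,b)\to(W,c)$ to be a morphism of $\Ord//X$ (and in general it is a strict inequality, so $h$ genuinely fails to be a morphism with this $c$). The structure you need is the \emph{least} $c$ with $b\le c\circ h$, whose pointwise description is the join $c(w)=\bigvee\{\,b(z)\mid h(z)\le w\,\}$ --- compare the formula $b(z)=\bigvee_{f(y)\le z}a(y)$ appearing in the paper's characterisation of regular epimorphisms, which is exactly this Kan extension. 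With the join in place of the meet, the rest of your computation goes through: $b(z)\le c(h(z))$ because $z$ indexes the join, and for any coequalising $p=q\circ h$ into $(V,d)$ one gets $c(w)=\bigvee_{h(z)\le w}b(z)\le\bigvee_{h(z)\le w}d(q(h(z)))\le d(q(w))$ by monotonicity of $d\circ q$. You anticipated this ``bookkeeping around the direction of the order'' as a possible obstacle, but as written the central formula and the check that the cocone even lives in $\Ord//X$ are incorrect, so the proof does not stand without this repair. (The terminology is partly to blame: the paper's ``right Kan extension'' is characterised in its proof as a least upper replacement, computed by a supremum, not by the limit formula you transcribed.)
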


\begin{proof}
 Assume that $h\colon(Z,b)\to(W,c)$ is the coequaliser of $f,g$ in $\Ord//X$. Since the forgetful functor into $\Ord$ is \emph{a left adjoint}, $h$ is the coequaliser of $f,g$ in $\Ord$. To check that $c$ is the right Kan extension of $b$ along $h$, let $c'\colon W\to X$ be such that $b\leq c'  h$. Then, by the universal property of the coequaliser, there exists $t\colon(W,c)\to(W,c')$ such that $t  h=h$; surely $t=1_W$ since $h$ is an epimorphism, and therefore $c\leq c'$ as claimed.

 Conversely, assume that $h\colon (Z,b)\to(W,c)$ satisfies conditions (1), (2). Then trivially $h  f=h  g$, and, for any $h'\colon (Z,b)\to(W',c')$ with $h'  f=h'  g$ there is a (unique) monotone map $t\colon W\to W'$ such that $t  h=h'$. Since, by assumption, $b\leq c'  h'=c'  t  h$, by (2) we conclude that $c\leq c'  t$, that is, $t\colon (W,c)\to (W',c')$ is a morphism in $\Ord//X$ and our conclusion follows.
\end{proof}
\begin{remark}[Weighted (co)limits]
	We refer to \cite[pages~7 \& 8]{ASV} for $\Ord $-enriched weighted (co)limits.
	Recall that an $\Ord $-enriched category is $\Ord$-(co)complete whenever it has conical (co)limits and the so-called $\Ord$-(co)tensors, called herein $\Ord$-(co)powers (see, for instance, \cite[Theorem~3.73]{Kel05} for the general enriched setting).

	We establish herein that, when $X$ is (co)complete,
	$ \Ord //  X $ is $\Ord$-complete and cocomplete.
	More precisely, assuming that $X$ is (co)complete, the conical (co)limits described in Section \ref{sect1:forgetful-functor}  are $\Ord$-enriched. Furthermore, for a pair $\left( W, \left( Y, a\right) \right)\in \Ord \times \Ord // X $:
	\begin{enumerate}
		\item[--] the $\Ord$-copower $W\otimes \left( Y, a\right)$ is given by $  \left( W\times Y, W\otimes a\right) $ where $W\otimes a (w,y) = a(y) $.
		\item[--] the $\Ord $-power $W\pitchfork \left( Y, a\right)$ is given by $\left( Y ^ W, a^\top\right)$ where $$\displaystyle a^\top (f) =\bigwedge_{w\in W}\,a(f(w)), $$
		in which $Y^W$ is the exponential in $\Ord $.
	\end{enumerate} 	
\end{remark}

\section{Exponentiability}
In order to investigate under which conditions $\Ord//X$ is a cartesian closed category, we first recall two well-known results.

First of all, the (complete) ordered set $X$, as a thin category, is cartesian closed if, and only if, it has a binary operation $X\times X\to X$, assigning to each pair $(x,y)$ an element $y^x$ such that $z\leq y^x$ if and only if $z\wedge x\leq y$, for every $x,y,z\in Z$.  This is in fact equivalent, for antisymmetric orders, to $X$ being a frame, i.e. in $X$ arbitrary joins distribute over finite meets, or a complete Heyting algebra.

Secondly, $\Ord$ is a cartesian closed category. For each ordered set $Y$, the right adjoint functor $(\;)^Y$ to $(\;)\times Y\colon\Ord\to\Ord$ assigns to each ordered set $Z$ the set \[Z^Y=\{f\colon Y\to Z\,;\,f\mbox{ is a monotone map}\},\] equipped with the pointwise order; that is, for $f,g\in Z^Y$, $f\leq g$ if, for all $y\in Y$, $f(y)\leq g(y)$.

On the contrary, $\Ord/X$ is not cartesian closed in general. The following result can be found in \cite{Th}.

\begin{theorem}
Given a monotone map $a\colon Y\to X$, the functor $(\;)\times(Y,a)\colon\Ord/X\to\Ord/X$  has a right adjoint if, and only if,
\[(\forall y_0\leq y_1\mbox{ in }Y)\;(\forall x\in X)\;:\;a(y_0)\leq x\leq a(y_1)\;\Rightarrow\;(\exists y\in Y)\;:\;y_0\leq y\leq y_1\mbox{ and }a(y)=x.\]
\end{theorem}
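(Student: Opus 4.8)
The plan is to prove both implications, relating the existence of the right adjoint to the interaction between pulling back along $a$ and colimits. Note first that $(\;)\times(Y,a)\colon\Ord/X\to\Ord/X$ is the composite of the pullback functor $a^{*}\colon\Ord/X\to\Ord/Y$, $(Z,b)\mapsto(Z\times_X Y\to Y)$, with post-composition along $a$, $\Sigma_a\colon\Ord/Y\to\Ord/X$, $(V,v)\mapsto(V,a\circ v)$; since $\Sigma_a\dashv a^{*}$, the functor $(\;)\times(Y,a)$ has a right adjoint if and only if $a^{*}$ does, that is, if and only if $a$ is an exponentiable morphism of $\Ord$. I shall use throughout that the forgetful functors $\Ord/X\to\Ord$ and $\Ord/Y\to\Ord$ create colimits, being comonadic; so colimits are computed on underlying ordered sets, with the underlying order of a colimit the preorder generated by the images of the colimit cocone.

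For necessity I would run a single pushout test. Fix $y_0\leq y_1$ in $Y$ and $x\in X$ with $a(y_0)\leq x\leq a(y_1)$, and in $\Ord/X$ form the pushout of $(\mathbf 2,\alpha)\leftarrow(\mathbf 1,x)\rightarrow(\mathbf 2,\beta)$, where $\mathbf 1$ is the one-point order, $\mathbf 2=\{0\leq 1\}$, the left leg picks the top and the right leg the bottom, $\alpha(0)=a(y_0)$, $\alpha(1)=x$, $\beta(0)=x$ and $\beta(1)=a(y_1)$. This pushout is $(\mathbf 3,\delta)$ with $\mathbf 3=\{0\leq 1\leq 2\}$ and $\delta(0)=a(y_0)$, $\delta(1)=x$, $\delta(2)=a(y_1)$; note that it lives in $\Ord/X$ exactly because $a(y_0)\leq x\leq a(y_1)$. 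Applying $(\;)\times(Y,a)$ and computing the corners as pullbacks over $X$, the corner $(\mathbf 3,\delta)\times(Y,a)$ has underlying set $\{(i,y):\delta(i)=a(y)\}$ with the order induced from $\mathbf 3$ and $Y$, so $(0,y_0)\leq(2,y_1)$ holds there. Now if $(\;)\times(Y,a)$ has a right adjoint it preserves this pushout, so the order on $(\mathbf 3,\delta)\times(Y,a)$ is generated by the images of the two legs, which relate only elements of consecutive levels; hence the relation $(0,y_0)\leq(2,y_1)$ must pass through a level-$1$ element, i.e. there is $y$ with $a(y)=x$ and $y_0\leq y\leq y_1$. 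As $(y_0,y_1,x)$ was arbitrary, the displayed condition follows.

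For sufficiency I would construct the right adjoint directly. Given $(W,c)\in\Ord/X$, let $(W,c)^{(Y,a)}$ have underlying set the pairs $(x,\sigma)$ with $x\in X$ and $\sigma\colon a^{-1}(x)\to c^{-1}(x)$ a monotone map, structure map $(x,\sigma)\mapsto x$, and order $(x,\sigma)\leq(x',\tau)$ iff $x\leq x'$ in $X$ and $\sigma(y)\leq\tau(y')$ in $W$ for every $y\leq y'$ in $Y$ with $a(y)=x$ and $a(y')=x'$. Reflexivity is immediate; transitivity is precisely where the hypothesis enters, for from $(x,\sigma)\leq(x',\tau)\leq(x'',\rho)$ and $y\leq y''$ with $a(y)=x$, $a(y'')=x''$, the interpolation condition supplies $y'$ with $y\leq y'\leq y''$ and $a(y')=x'$, whence $\sigma(y)\leq\tau(y')\leq\rho(y'')$. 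The counit is evaluation, $((x,\sigma),y)\mapsto\sigma(y)$ when $a(y)=x$, and the required bijection, natural in $(Z,b)$, between morphisms $(Z,b)\to(W,c)^{(Y,a)}$ and morphisms $(Z,b)\times(Y,a)\to(W,c)$ over $X$ sends $\phi$, written $\phi(z)=(b(z),\sigma_z)$, to $(z,y)\mapsto\sigma_z(y)$; that this is a well-defined bijection and natural is a routine unwinding of the definitions. One could instead argue, without building the adjoint, that the hypothesis makes $(\;)\times(Y,a)$ preserve all colimits --- the point being that the canonical comparison from the colimit of the products $(Z_j,b_j)\times(Y,a)$ to the product of the colimit with $(Y,a)$, always a bijection over $X$, reflects order, which follows by lifting a chain $z=w_0\leq\cdots\leq w_n=z'$ witnessing $z\leq z'$ through the fibres of $a$, using the interpolation condition repeatedly along $b(w_0)\leq\cdots\leq b(w_n)$ --- and then to invoke that $\Ord$, hence each of its slices, is locally presentable, so that a cocontinuous functor there is a left adjoint.

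The step I expect to be the crux is the transitivity of the order on $(W,c)^{(Y,a)}$ --- equivalently, in the colimit-theoretic route, the order-reflection of the comparison map: this is the one place where the hypothesis is genuinely used, and it is, in essence, the specialization to thin categories of the Conduch\'{e}--Giraud characterization of exponentiable functors, the general factorization-lifting requirement degenerating in $\Ord$ to the bare existence of a lift, which is exactly the displayed condition.
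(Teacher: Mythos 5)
Your proof is correct, and in fact it supplies something the paper does not: the paper states this theorem without proof, merely citing Tholen's Como abstract \cite{Th}, so there is no argument in the text to compare yours against. Both halves of your argument check out. For necessity, the pushout $(\mathbf 2,\alpha)\leftarrow(\mathbf 1,x)\rightarrow(\mathbf 2,\beta)$ is indeed created in $\Ord/X$ from $\Ord$ (the forgetful functor being comonadic), its underlying preorder is the one generated by the images of the two legs, and since every generating relation raises the level by at most one, any chain witnessing $(0,y_0)\leq(2,y_1)$ must pass through a level-$1$ element, which is exactly the interpolant; a right adjoint forces the comparison bijection onto $(\mathbf 3,\delta)\times(Y,a)$ to be an order-isomorphism, so the interpolant exists. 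For sufficiency, your fibrewise exponential $(x,\sigma)$ with the ``all comparable pairs across the two fibres'' order is the correct object: reflexivity is monotonicity of $\sigma$ on the fibre, transitivity is precisely where the interpolation hypothesis enters, and the evaluation/transpose bijection is the routine verification you describe (a useful sanity check: for $X=\mathbf 1$ your order on $\sigma\leq\tau$ coincides with the pointwise order, and for $a=1_X$ the construction returns $(W,c)$). Two minor points: the opening claim that $(\;)\times(Y,a)$ has a right adjoint iff $a^{*}$ does is standard but its ``only if'' half needs a word of justification ($\Ord/Y\simeq(\Ord/X)/(Y,a)$); since neither half of your actual argument uses this reduction, it is harmless. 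Your closing remark correctly identifies this as the thin-category degeneration of the Conduch\'e--Giraud condition, where the connectedness requirement on the category of lifts trivializes and only existence of a lift survives.
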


Again, $\Ord//X$ behaves differently:
\begin{theorem}
The following assertions are equivalent:
\begin{tfae}
\item $X$ is cartesian closed;
\item $\Ord//X$ is cartesian closed.
\end{tfae}
\end{theorem}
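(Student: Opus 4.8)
The plan is to establish the two implications separately, the real content lying in (i)$\Rightarrow$(ii).

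\emph{(i)$\Rightarrow$(ii).} Since $X$ is complete, its being cartesian closed is equivalent to $X$ being a frame, i.e.\ a complete Heyting algebra, whose implication $v^{u}=\bigvee\{w\in X : w\wedge u\le v\}$ satisfies $w\wedge u\le v$ if and only if $w\le v^{u}$. By the preceding Corollary $\Ord//X$ is complete, so it has a terminal object and binary products, the latter given by $(Y,a)\times(Z,b)=(Y\times Z,\,a\wedge b)$ with $(a\wedge b)(y,z)=a(y)\wedge b(z)$; it therefore remains to produce exponentials. I claim that, for $(Y,a),(Z,b)\in\Ord//X$, the exponential $(Z,b)^{(Y,a)}$ is $(Z^{Y},\beta)$, where $Z^{Y}$ is the exponential in $\Ord$ (all monotone maps $Y\to Z$, with the pointwise order) and $\beta\colon Z^{Y}\to X$ is given by $\beta(f)=\bigwedge_{y\in Y}\,b(f(y))^{a(y)}$; in particular, and in contrast with $\Ord/X$, no passage to a substructure of $Z^{Y}$ is needed. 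The verification has three steps. First, $\beta$ is monotone: if $f\le f'$ then $b(f(y))\le b(f'(y))$, hence $b(f(y))^{a(y)}\le b(f'(y))^{a(y)}$ for every $y$ (the operation $(-)^{u}$ is monotone, as $v^{u}\wedge u\le v$), and so $\beta(f)\le\beta(f')$. Second, the evaluation map of $\Ord$ underlies a morphism $(Z^{Y},\beta)\times(Y,a)\to(Z,b)$, since $\beta(f)\wedge a(y)\le b(f(y))^{a(y)}\wedge a(y)\le b(f(y))$. Third, for each $(W,c)$, the transposition bijection of $\Ord$ between monotone maps $g\colon W\times Y\to Z$ and monotone maps $\widehat g\colon W\to Z^{Y}$ restricts to a bijection between morphisms $(W,c)\times(Y,a)\to(Z,b)$ and morphisms $(W,c)\to(Z^{Y},\beta)$ in $\Ord//X$: indeed, by the adjunction $-\wedge a(y)\dashv(-)^{a(y)}$ in $X$, the family of inequalities $c(w)\wedge a(y)\le b(g(w,y))$ ($w\in W$, $y\in Y$) is equivalent to the family $c(w)\le b(\widehat g(w)(y))^{a(y)}$ ($w\in W$, $y\in Y$), hence to $c(w)\le\beta(\widehat g(w))$ ($w\in W$). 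This bijection is natural in $(W,c)$, naturality being inherited from $\Ord$, so $(Z^{Y},\beta)$ is the exponential and $\Ord//X$ is cartesian closed.

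\emph{(ii)$\Rightarrow$(i).} By the characterisation recalled above it suffices to show that $X$ is a Heyting algebra, i.e.\ that $\{d\in X : d\wedge x\le y\}$ has a greatest element for every $x,y\in X$. Assume $\Ord//X$ is cartesian closed, and form the exponential $(Z,c):=(1,y)^{(1,x)}$, with its defining natural bijection between morphisms $(W,d)\to(Z,c)$ and morphisms $(W,d)\times(1,x)\to(1,y)$; here $1$ denotes the one-element ordered set, and (as in Section~\ref{sect1:forgetful-functor}) an element of $X$ and the constant map with that value are written alike. Evaluating this bijection at $(W,d)=(1,\bot)$, with $\bot$ the least element of $X$: a morphism $(1,\bot)\to(Z,c)$ is merely a choice of element of $Z$, since $\bot\le c(\zeta)$ for every $\zeta\in Z$, whereas $(1,\bot)\times(1,x)=(1,\bot)$ and there is exactly one morphism $(1,\bot)\to(1,y)$. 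Hence $Z$ has exactly one element, so $(Z,c)=(1,z)$ for a unique $z\in X$. Evaluating the bijection at $(W,d)=(1,d)$ for an arbitrary $d\in X$ then gives $d\le z$ if and only if $d\wedge x\le y$. Thus $z=\max\{d\in X : d\wedge x\le y\}$ exists; since $x,y$ were arbitrary, $X$ is a Heyting algebra, hence (being complete) a frame, hence cartesian closed.

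The main obstacle is the first implication, and specifically the identification of the structure map $\beta(f)=\bigwedge_{y}b(f(y))^{a(y)}$ together with the realisation that the underlying object of the exponential is the \emph{whole} function space $Z^{Y}$: the naive choice $\bigwedge_{y}b(f(y))$ is wrong, and no cutting down is required, which is exactly where $\Ord//X$ parts company with $\Ord/X$. Once $\beta$ is correctly guessed, the three steps above are routine translations of the cartesian closed structure of $\Ord$ along the adjunction $-\wedge u\dashv(-)^{u}$ of $X$. The converse is short, its only device being to probe the universal property of $(1,y)^{(1,x)}$ with the ``point'' $(1,\bot)$ so as to force that exponential to be point-like, whereupon the Heyting implication $x\Rightarrow y$ can simply be read off.
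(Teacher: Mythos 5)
Your proposal is correct. The main implication (i)$\Rightarrow$(ii) is essentially identical to the paper's argument: the same exponential $(Z^Y,\beta)$ with $\beta(f)=\bigwedge_{y\in Y}b(f(y))^{a(y)}$ on the \emph{whole} function space, the same monotonicity check via $v^u\wedge u\le v$, the same evaluation morphism, and the same transposition argument using the adjunction $(-)\wedge a(y)\dashv(-)^{a(y)}$ in $X$. For the converse, you take a slightly different route: the paper computes $(W,c)=(X,1_X)^{(1,x)}$, identifies $W\cong X$, and reads off $y^x=c(y)$ for all $y$ at once, whereas you compute $(1,y)^{(1,x)}$ for each fixed pair $(x,y)$ and first force its underlying set to be a singleton by probing with $(1,\bot)$ (using completeness of $X$ to have $\bot$ available). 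Your variant is marginally more self-contained, since it avoids the ``easily checked'' identification $W\cong\Ord(1,X)\cong X$ and works purely with hom-set cardinalities; the paper's variant produces the whole Heyting implication $(-)^x$ in a single exponential. Both are complete and correct.
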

\begin{proof}
(i)$\Rightarrow$(ii): Given two objects $a\colon Y\to X$ and $b\colon Z\to X$ in $\Ord//X$, in order to define $(Z,b)^{(Y,a)}$ first we consider the ordered set $Z^Y$ as defined above, and then the map $b^a\colon Z^Y\to X$ defined by
\[b^a(f)=\bigwedge_{y\in Y}\,b(f(y))^{a(y)}.\]
The map $b^a$ is monotone: if $f,g\colon Y\to Z$ are monotone maps, with $f\leq g$, then, for every $y\in Y$,
\[b(f(y))^{a(y)}\wedge a(y)\leq b(f(y))\leq b(g(y))\;\Rightarrow\;b(f(y))^{a(y)} \leq b(g(y))^{a(y)}.\]
The monotone map $\ev$ is a morphism in $\Ord//X$
\[\xymatrix{Z^Y\times Y\ar@{}[rrd]|{\leq}\ar[rd]_{b^a\wedge a}\ar[rr]^-\ev&&Z\ar[ld]^b\\
&X&}\]
since, by definition of $b^a$, for all $f\in Z^Y$ and $y\in Y$,
\[b^a(f)\wedge a(y)=\bigwedge_{y'\in Y}\,b(f(y'))^{a(y')}\wedge a(y)\leq b(f(y))^{a(y)}\wedge a(y)\leq b(f(y)).\]
To check its universality let $c\colon W\to X$ be an object and $h\colon (W,c)\times(Y,a)\to(Z,b)$ a morphism in $\Ord//X$. Then $\overline{h}\colon W\to Z^Y$, with $\overline{h}(w)\colon Y\to Z$ defined by $\overline{h}(w)(y)=h(w,y)$ for every $w\in W$ and $y\in Y$, is a morphism in $\Ord//X$:
\[c(w)\wedge a(y)\leq b(h(w,y))\Rightarrow c(w)\leq b(\overline{h}(w)(y))^{a(y)},\]
hence \[c(w)\leq \bigwedge_{y\in Y}\,b(\overline{h}(w)(y))^{a(y)}.\]
Therefore $(\;)\times (Y,a)$ has a right adjoint $(\;)^{(Y,a)}$ assigning $(Z^Y,b^a)$ to each $(Z,b)$ in $\Ord//X$.

(ii)$\Rightarrow$(i): Assuming that $\Ord//X$ is cartesian closed,  for each $x\in X$, let $(W,c)=(X,1_X)^{(1,x)}$ be the exponential in $\Ord//X$. Then the ordered set $W$ is isomorphic to $X$, since:
{\small \[W\cong\Ord//X((1,\bot),(X,1_X)^{(1,x)})\cong\Ord//X((1,\bot)\times(1,x),(X,1_X))\cong \Ord//X((1,\bot),(X,1_X))\cong X.\]}
We will show that $y^x=c(y)$, where $y\colon 1\to X$ is the map assigning $y$ to the only element of $1$,  for $y\in X$. Using
\[\xymatrix{W\times 1\ar@{}[rrd]|{\leq}\ar[rd]_{c\wedge x}\ar[rr]^-\ev&&X\ar[ld]^{1_X}\\
&X&}\]
one concludes that $c(y)\wedge x\leq y$; moreover, from the universality of $\ev$ it follows that, if $z\wedge x\leq y$, then the morphism $y\colon(1,z)\times(1,x)\to(X,1_X)$.
\[\xymatrix{1\times 1\ar@{}[rrd]|{\leq}\ar[rd]_{z \wedge x}\ar[rr]^-y&&X\ar[ld]^{1_X}\\
&X&}\] induces, by universality of $\ev$, a morphism  $\overline{u}\colon (1,z)\to (W,c)$ such that $\ev  (\overline{u}\times 1)=y$, and thus $z\leq c(y)$ as required.
\end{proof}

A careful analysis of this proof allows us to conclude the following

\begin{corollary}\label{coro:exponential}
Let $(Y,a)$ an object of $\Ord//X$. The following conditions are equivalent:
\begin{tfae}
\item $(Y,a)$ is exponentiable in $\Ord//X$;
\item For all $y\in Y$, $a(y)$ is exponentiable in $X$.
\end{tfae}
\end{corollary}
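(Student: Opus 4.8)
The plan is to prove the two implications separately, in both cases using the proof of the Theorem above as a template.

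\emph{(ii)$\Rightarrow$(i).} Here I would simply re-read the construction carried out in the proof of (i)$\Rightarrow$(ii) of the Theorem and observe that it never used that \emph{every} element of $X$ is exponentiable, only those of the form $a(y)$, $y\in Y$. Indeed, for $(Z,b)$ in $\Ord//X$ the candidate exponential $(Z^Y,b^a)$ with $b^a(f)=\bigwedge_{y\in Y}\,b(f(y))^{a(y)}$ already makes sense as soon as each $a(y)$ is exponentiable in $X$, so that the exponentials $b(f(y))^{a(y)}$ exist; and the three verifications there — monotonicity of $b^a$, the fact that $\ev$ is a morphism of $\Ord//X$, and its universal property — use nothing about $X$ beyond completeness and the defining adjunction $w\le z^{a(y)}\Leftrightarrow w\wedge a(y)\le z$. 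So the same argument, read verbatim, produces a right adjoint to $(\;)\times(Y,a)$.

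\emph{(i)$\Rightarrow$(ii).} Fix $y_0\in Y$; I must produce $z^{a(y_0)}$ in $X$ for every $z\in X$. The first step is to describe the exponential $E:=(X,1_X)^{(Y,a)}$, which exists by hypothesis. Being topological, $U\colon\Ord//X\to\Ord$ has a left adjoint $L$, with $LW=(W,\bot)$, where $\bot$ is the constant map at the bottom element of the complete ordered set $X$. Since $LW\times(Y,a)=(W\times Y,\bot)$ and a morphism $(W\times Y,\bot)\to(X,1_X)$ in $\Ord//X$ is just a monotone map $W\times Y\to X$, composing the exponential adjunction for $(Y,a)$ with $L\dashv U$ gives $\Ord(W,UE)\cong\Ord(W\times Y,X)\cong\Ord(W,X^Y)$ naturally in $W$; by the Yoneda lemma, $UE\cong X^Y$, so we may take $E=(X^Y,\beta)$ for a suitable monotone $\beta\colon X^Y\to X$. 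Reading the exponential adjunction along the probes $(1,x)$, $x\in X$ (the bijection being the tautological one on underlying monotone maps), one then obtains: for all $h\in X^Y$ and $x\in X$,
\[
x\le\beta(h)\quad\Longleftrightarrow\quad\bigl(\forall y\in Y:\; x\wedge a(y)\le h(y)\bigr).
\]

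The second step is to feed a well-chosen $h$ into this equivalence. Given $z\in X$, set
\[
h^{\ast}(y)=\bigvee\{w\in X\,;\,w\le a(y)\ \text{ and }\ w\wedge a(y_0)\le z\}\qquad(y\in Y).
\]
One readily checks that $h^{\ast}$ is monotone (the indexing set grows with $a(y)$), that $h^{\ast}(y_0)=a(y_0)\wedge z$, and that for every $x\in X$ with $x\wedge a(y_0)\le z$ and every $y\in Y$ one has $x\wedge a(y)\le h^{\ast}(y)$ (because then $x\wedge a(y)$ belongs to the indexing set defining $h^{\ast}(y)$). Combined with the instance $y=y_0$ — where $x\wedge a(y_0)\le h^{\ast}(y_0)=a(y_0)\wedge z$ just says $x\wedge a(y_0)\le z$ — this shows that the right-hand side of the displayed equivalence, for $h=h^{\ast}$, is equivalent to $x\wedge a(y_0)\le z$. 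Hence $\beta(h^{\ast})\in X$ satisfies $x\le\beta(h^{\ast})\Leftrightarrow x\wedge a(y_0)\le z$ for all $x$; taking $x=\beta(h^{\ast})$ yields $\beta(h^{\ast})\wedge a(y_0)\le z$, so $\beta(h^{\ast})$ is the largest $x$ with $x\wedge a(y_0)\le z$, i.e. $z^{a(y_0)}=\beta(h^{\ast})$ exists. As $z$ and $y_0$ were arbitrary, (ii) follows.

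\emph{On the difficulties.} The calculations above are genuinely routine; the one point requiring care is the identification $UE\cong X^Y$ in the first step of (i)$\Rightarrow$(ii), together with the check that the exponential adjunction, restricted along the probes $(1,x)$, is the tautological bijection on underlying maps — this is what legitimises the displayed equivalence, and everything else is a transcription of the Theorem's proof. It is worth noting that this Corollary refines the Theorem: $\Ord//X$ is cartesian closed iff every $(Y,a)$ is exponentiable iff every element of $X$ of the form $a(y)$ is exponentiable iff, taking $(Y,a)=(1,x)$, every element of $X$ is exponentiable, i.e. iff $X$ is cartesian closed.
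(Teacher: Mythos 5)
Your proof is correct. The direction (ii)$\Rightarrow$(i) is exactly the paper's argument: re-run the construction of $(Z^Y,b^a)$ with $b^a(f)=\bigwedge_{y\in Y}b(f(y))^{a(y)}$ and observe that only exponentials by the elements $a(y)$ are ever used. For (i)$\Rightarrow$(ii) you follow the same overall strategy as the paper --- identify $(X,1_X)^{(Y,a)}$ as $(X^Y,\beta)$ (your $L\dashv U$/Yoneda argument for $UE\cong X^Y$, and the check that $\ev$ becomes the tautological evaluation, are both sound) and then read off the universal property along the probes $(1,x)$, giving $x\leq\beta(h)\Leftrightarrow(\forall y)\,x\wedge a(y)\leq h(y)$ --- but your choice of test function is genuinely different, and in fact better. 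The paper evaluates its structure map $c$ at the constant function $g\equiv x$; that only characterizes $c(g)$ as the largest $z$ with $z\wedge a(y)\leq x$ \emph{for all} $y$ simultaneously (the morphism condition for the constant map on $(1,z)\times(Y,a)$ involves every $y'\in Y$, not just a fixed one), which does not by itself produce $x^{a(y_0)}$ for an individual $y_0$. Your $h^{\ast}(y)=\bigvee\{w\leq a(y)\,;\,w\wedge a(y_0)\leq z\}$, with $h^{\ast}(y_0)=a(y_0)\wedge z$ and $x\wedge a(y)\leq h^{\ast}(y)$ whenever $x\wedge a(y_0)\leq z$, isolates the single element $a(y_0)$ and yields $z^{a(y_0)}=\beta(h^{\ast})$ cleanly. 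So your argument both matches the paper's template and repairs the one step where the paper's sketch is too quick; the closing remark that the Corollary recovers the Theorem via $(Y,a)=(1,x)$ is also correct.
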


\begin{proof}
(i) $\Rightarrow$ (ii) is shown exactly as in the Theorem above, observing that to define the exponentials with exponent $(Y,a)$ we only need exponentials in $X$ with exponent $a(y)$, for $y\in Y$.

(ii) $\Rightarrow$ (i): Assuming that $(Y,a)$ is exponentiable, let $(W,c)=(X,1_X)^{(Y,a)}$  and let $\ev\colon (W,c)\times(Y,a)\to (X,1_X)$ be the counit of the adjunction. Then, as before, it is easy to check that $W=\{f\colon Y\to X\,;\, f$ is monotone$\}$:
{\small \[W\cong\Ord//X((1,\bot),(W,c))\cong \Ord//X((1,\bot)\times(Y,a),(X,1_X))\cong\Ord//X((Y,\bot),(X,1_X))\cong\Ord(Y,X).\]} Moreover, the isomorphism
\[\Psi\colon\Ord//X((1,\bot),(W,c))\to\Ord//X((1,\bot)\times(Y,a),(X,1_X))\]
is given by \[\Psi(g\colon(1,\bot)\to(W,c))\colon (1,\bot)\times(Y,a)\to(X,1_X),\;(*,y)\mapsto \ev(g,y),\]
for every $g\colon(1,\bot)\to(W,c)$. That is, the counit can be defined as $\ev(g,y)=g(y)$. Let $x\in X$, and  $g\colon Y\to X$ be defined by $g(y)=x$ for all $y\in Y$. Then, on one hand, $c(g)\wedge a(y)\leq x$ because $\ev\colon (W,c)\times (Y,a)\to (X,1_X)$ is a morphism in $\Ord//X$, and, on the other hand, if $z\in X$ is such that $z\wedge a(y)\leq x$ then the map $h\colon (1,z)\times(Y,a)\to (X,1_X)$ constantly equal to $x$ is a morphism in $\Ord//X$ and so there is $\overline{h}\colon(1,z)\to(W,c)$ such that $\ev(\overline{h}\times 1_Y)=h$. Necessarily $\overline{h}(*)=g$ and therefore $z\leq c(g)$.
\end{proof}

\section{Descent}

A morphism $p\colon E\to B$ in a category $\Cc$ with pullbacks is said to be of \emph{effective descent} if the change-of-base functor $p^*\colon \Cc/B\to \Cc/E$ is monadic. The study of effective descent morphisms in (topological) categories has a long history, with several notable contributions such as \cite{JT91, RT, JS, CH02, JST}.

The characterization of effective descent morphisms in $\Ord$ was established in \cite{JS}. This characterization naturally extends to the comma categories, where a morphism $f$ in the comma category $\Ord/X$ is of effective descent if and only if its underlying morphism in $\Ord$ is of effective descent. This is due to the natural isomorphism between $\Ord/X/(Y,a)$ and $\Ord/Y$. However, this simplification does not apply to lax comma ordered categories.

Therefore, in this section, we will focus on the study of effective descent morphisms in $\Ord//X$.
In particular we will show that being effective for descent in $\Ord$ is necessary but not sufficient for a morphism to be effective for descent in $\Ord//X$.


We start by characterizing (pullback stable) regular epimorphisms in $\Ord//X$.

\begin{lemma}
For a morphism $f\colon(Y,a)\to(Z,b)$ in $\Ord//X$, the following conditions are equivalent:
\begin{tfae}
\item $f$ is a regular epimorphism in $\Ord//X$;
\item $f$ is a regular epimorphism in $\Ord$ and
\begin{equation}\label{eq:regepi}
(\forall z\in Z)\;\;b(z)=\bigvee_{f(y)\leq z}\,a(y).
\end{equation}
\end{tfae}
\end{lemma}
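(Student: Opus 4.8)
The plan is to use the description of coequalisers in $\Ord//X$ from the Lemma above, together with the characterisation of regular epimorphisms in $\Ord$, and to translate condition (2) of that Lemma — that the structure map is a right Kan extension — into the explicit formula \eqref{eq:regepi}.

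First I would recall that regular epimorphisms coincide with coequalisers of their kernel pairs in both $\Ord$ and $\Ord//X$ (both categories being cocomplete, the latter by the Corollary, with $U$ topological so that it creates the relevant colimits and preserves kernel pairs, since $U$ has both adjoints). So $f\colon(Y,a)\to(Z,b)$ is a regular epimorphism in $\Ord//X$ if and only if it is the coequaliser of its kernel pair $p_1,p_2\colon(R,r)\to(Y,a)$, and by the preceding Lemma this happens exactly when (1) $f$ is the coequaliser in $\Ord$ of $p_1,p_2$ — equivalently (since $U$ creates this coequaliser and preserves kernel pairs) $f$ is a regular epimorphism in $\Ord$ — and (2) $b$ is the right Kan extension of $a$ along $f$.

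The heart of the argument is therefore the identification of condition (2) with \eqref{eq:regepi}. For a monotone surjection $f\colon Y\to Z$ between ordered sets and $a\colon Y\to X$ with $X$ a complete Heyting algebra, the right Kan extension $\ran_f a\colon Z\to X$ is computed by the usual end/limit formula, which in the thin enriched setting $\Ord$ becomes
\[
(\ran_f a)(z)=\bigwedge_{y\in Y}\ a(y)^{\,Z(f(y),z)},
\]
where $Z(f(y),z)\in\{0,1\}$ is $1$ if $f(y)\leq z$ and $0$ otherwise, and $x^{1}=x$, $x^{0}=\top$. Hence $(\ran_f a)(z)=\bigwedge_{f(y)\leq z} a(y)$. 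Wait — I should be careful: condition (2) asks that $b$ \emph{equals} the right Kan extension, not merely that $b\leq\ran_f a$; but in the Lemma's proof the Kan extension condition is exactly "for all $c'$ with $b\leq c'h$ one has $c\leq c'$", i.e. $b$ is the largest map below which $f$ still factors $a$ up to $\leq$, and since $\ran_f a$ itself satisfies $a\leq(\ran_f a)f$ while every such $b$ satisfies $b\leq\ran_f a$, condition (2) is equivalent to $b=\ran_f a$ precisely when $b$ is \emph{itself} one of these $c'$, i.e. when $a\leq bf$ — which holds by hypothesis on the object $(Z,b)$. So condition (2) becomes $b(z)=\bigwedge_{f(y)\leq z}a(y)$.

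Now I must reconcile this meet with the join in \eqref{eq:regepi}. This is where the cartesian-closedness of $X$ enters decisively: the covariant Kan extension relevant to a \emph{colimit} (coequaliser) in an $\Ord$-enriched category is not the pointwise right Kan extension along $f$ in the naive sense but the one induced by the coequaliser, and unwinding the universal property "$t\colon(W,c)\to(W',c')$ is a morphism whenever $b\le c'h'$" against a regular epi $f$ shows directly that $b(z)$ must be the \emph{join} $\bigvee_{f(y)\le z}a(y)$: indeed $a\le bf$ gives $a(y)\le b(f(y))\le b(z)$ for all $y$ with $f(y)\le z$, hence $\bigvee_{f(y)\le z}a(y)\le b(z)$, and conversely the map $c'(z):=\bigvee_{f(y)\le z}a(y)$ is monotone with $a\le c'f$, so minimality forces $b\le c'$, giving equality. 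The forward direction (i)$\Rightarrow$(ii) runs this argument with $(W',c')$ the candidate quotient; the converse (ii)$\Rightarrow$(i) checks that a monotone surjection $f$ with $b(z)=\bigvee_{f(y)\le z}a(y)$ does coequalise its kernel pair in $\Ord//X$ by verifying conditions (1) and (2) of the previous Lemma, the Kan extension condition following because \eqref{eq:regepi} exhibits $b$ as the least structure making $f$ a morphism out of $(Y,a)$.

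The main obstacle I anticipate is getting the variance right: one must be careful that the coequaliser structure map is characterised by a \emph{left} Kan–extension-type minimality (least $c'$ with $a\le c'f$, giving a join), reconciling this with the statement of the previous Lemma which phrases the same condition via a right Kan extension of $b$ along $h$; the cleanest route is to bypass Kan-extension language entirely in this proof and argue directly from the universal property of the coequaliser of the kernel pair, using surjectivity of $f$ to drop the kernel-pair relation and reduce "morphism out of the quotient" to "$a\le c'f$", whence \eqref{eq:regepi} is immediate.
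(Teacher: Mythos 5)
Your proposal is correct and follows essentially the same route as the paper: it reduces, via the preceding coequaliser lemma, to identifying the structure map $b$ as the \emph{least} monotone $b'$ with $a\leq b'f$, which is pointwise the join $\bigvee_{f(y)\leq z}a(y)$ — exactly the paper's two-line verification. The detour through the end formula for $\ran_f a$ (which produces a meet of the wrong variance) is a false start that you correctly flag and abandon in favour of the direct universal-property argument, and note that cartesian closedness of $X$ plays no role in this lemma — only completeness is needed.
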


\begin{proof}
What remains to show is that \eqref{eq:regepi} is equivalent to $b=\lan_f a$, that is, $b$ is the left Kan extension of $a$ along $f$. Given a regular epimorphism $f\colon Y\to Z$ in $\Ord$ and a monotone map $a\colon Y\to X$, \eqref{eq:regepi} defines a monotone map $b\colon Z\to X$ such that $a\leq b f$. Moreover, if $a\leq b'f$ for some monotone map $b'\colon Z\to X$, then, for every $z\in Z$ and $y\in Y$ with $f(y)\leq z$,
$a(y)\leq c'(f(y))\leq c'(z)$,
and so $c\leq c'$. The converse is shown analogously.
\end{proof}

Recall that a morphism $g: Y\to Z $ is a (pullback) stable regular epimorphism in $\Ord$ if, and only if, for each $z_0\leq z_1$ in $Z$ there exist $y_0\leq y_1$ in $Y$ such that $g(y_i)=z_i$ ($i=0,1$).

\begin{proposition}
For a morphism $f\colon(Y,a)\to(Z,b)$ in $\Ord//X$, consider the following conditions:
\begin{tfae}
\item $f$ is a stable regular epimorphism in $\Ord//X$;
\item $Uf$ is a stable regular epimorphism in $\Ord$ and
\begin{equation}\label{eq:stableregepi}
(\forall z\in Z)\;\;b(z)=\bigvee_{f(y)= z}\,a(y).
\end{equation}
\end{tfae}
Then \emph{(i)$\Rightarrow$(ii)}, and \emph{(i)$\Leftrightarrow$(ii)} provided that $X$ is cartesian closed.
\end{proposition}

\begin{proof}
(i) $\Rightarrow$ (ii): The forgetful functor $U\colon\Ord//X\to \Ord$ preserves regular epimorphisms and pullbacks, hence every stable regular epimorphism in $\Ord//X$ is also stably a regular epimorphism in $\Ord$. If \eqref{eq:stableregepi} does not hold, that is, if there exists $z\in Z$ with $b(z)> \bigvee_{f(y)=z}\,a(y)$, then we consider the pullback of $f$ along $g\colon(1,b(z))\to(Z,b)$ with $g(*)=z$. It is easy to check that in the pullback diagram
\[\xymatrix{f^{-1}(z)\ar@/_4pc/[rdd]_{a\pi_1}\ar[rr]^-{\pi_2}\ar[d]^{\pi_1}&&1\ar[d]^g\ar@/^4pc/[ldd]^{b(z)}\\
Y\ar@{}[rrd]|{\leq}\ar[rd]_a\ar[rr]^f&&Z\ar[ld]^b\\
&X&}\]
$\pi_2$ is not a regular epimorphism in $\Ord//X$ since it does not satisfy \eqref{eq:regepi}.\\

(ii) $\Rightarrow$ (i): If $f\colon(Y,a)\to(Z,b)$ satisfies (ii), for any pullback diagram in $\Ord//X$
\[\xymatrix{Y\times_ZW\ar@/_4pc/[rdd]_{a\pi_1\wedge c\pi_2}\ar[rr]^-{\pi_2}\ar[d]^{\pi_1}&&W\ar[d]^g\ar@/^4pc/[ldd]^{c}\\
Y\ar@{}[d]|\leq\ar@{}[rrd]|{\leq}\ar[rd]_a\ar[rr]^f&&Z\ar@{}[d]|\geq\ar[ld]^b\\
&X&}\]
by assumption $\pi_2$ is a regular epimorphism in $\Ord$, so it remains to be shown that $\pi_2$ satisfies \eqref{eq:regepi}: for any $w\in W$,
$c(w)\leq b(g(w))=\bigvee_{f(y)=g(w)}\,a(y)$; hence
\[\begin{array}{rcll}
c(w)\leq b(g(w))\wedge c(w)&=&(\displaystyle\bigvee_{f(y)=g(w)}\,a(y))\wedge c(w)\vspace*{2mm}\\
&=&\displaystyle\bigvee_{f(y)=g(w)}\,(a(y)\wedge c(w))&\mbox{(because $X$ is cartesian closed)}\vspace*{2mm}\\
&\leq&\displaystyle\bigvee_{f(y')=g(w'),\,w'\leq w}\,(a\pi_1\wedge c\pi_2)(y',w').
\end{array}\]
\end{proof}

\pagebreak
Next we investigate effective descent morphisms in $\Ord//X$. We will show that, for a given morphism $f\colon(Y,a)\to(Z,b)$ in $\Ord//X$,

\begin{equation}\label{eq:PED}
(\forall z_0\leq z_1\leq z_2\mbox{ in }Z)\;(\exists y_0\leq y_1\leq y_2\mbox{ in }Y)\;:\;f(y_i)=z_i\;(i=0,1,2)\mbox{ and }a(y_0)=b(z_0)
\end{equation}
\[\Downarrow\]
\[f\mbox{ is effective for descent in }\Ord//X\]
\[\Downarrow\]
\begin{equation}\label{eq:ED}
(\forall z_0\leq z_1\leq z_2\mbox{ in }Z)\;(\exists y_0\leq y_1\leq y_2\mbox{ in }Y)\;:\;f(y_i)=z_i\;(i=0,1,2).
\end{equation}

We start by showing the latter implication.

\begin{theorem}
If $f\colon(Y,a)\to(Z,b)$ is effective for descent in $\Ord//X$, then $Uf\colon Y\to Z$ is effective for descent in $\Ord$.
\end{theorem}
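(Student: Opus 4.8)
The plan is to use the monadicity characterisation of effective descent morphisms (the B\'{e}nabou--Roubaud theorem): since $X$ is complete, $\Ord//X$ has pullbacks, so $f\colon(Y,a)\to(Z,b)$ is effective for descent in $\Ord//X$ exactly when $f^{*}\colon(\Ord//X)/(Z,b)\to(\Ord//X)/(Y,a)$ is monadic, and $Uf$ is effective for descent in $\Ord$ exactly when $(Uf)^{*}\colon\Ord/Z\to\Ord/Y$ is monadic. So the task is to transfer monadicity from $f^{*}$ to $(Uf)^{*}$, and the structural input that makes this possible is that $\Ord/Z$ is a \emph{full reflective} subcategory of $(\Ord//X)/(Z,b)$, compatibly with change of base.

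First I would record this reflection. The inclusion $\iota_{Z}\colon\Ord/Z\to(\Ord//X)/(Z,b)$ sends a monotone map $g\colon W\to Z$ to the object $(W,bg)$ equipped with the structure morphism $g$, while the reflector $r_{Z}$ sends an object $g\colon(W,c)\to(Z,b)$ to its underlying monotone map $g\colon W\to Z$ (so $r_{Z}$ is precisely the functor induced by $U$ on these slices). The adjunction $r_{Z}\dashv\iota_{Z}$ holds because, for a monotone $h\colon W\to W'$ over $Z$, the additional inequality $c\leq bg'h$ required for $h$ to be a morphism of $\Ord//X$ follows automatically from $c\leq bg$ and $g'h=g$; and $r_{Z}\iota_{Z}=\mathrm{id}$, so $\iota_{Z}$ is fully faithful. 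The same holds over $(Y,a)$. Since $U$ is topological it preserves pullbacks, and from the explicit formula for pullbacks in $\Ord//X$ one obtains the compatibilities $f^{*}\iota_{Z}=\iota_{Y}(Uf)^{*}$ and $r_{Y}f^{*}=(Uf)^{*}r_{Z}$: in the pullback $(Y\times_{Z}W,\ a\pi_{1}\wedge c\pi_{2})$ one has $f\pi_{1}=g\pi_{2}$, so that when $c=bg$ the summand $c\pi_{2}=bf\pi_{1}$ dominates $a\pi_{1}$ and the meet collapses to $a\pi_{1}$, which is the structure map of $\iota_{Y}\bigl((Uf)^{*}g\bigr)$; the second identity is analogous. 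In particular $f^{*}$ restricts, along the inclusions, to $(Uf)^{*}$.

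Next I would verify the hypotheses of Beck's monadicity theorem for $(Uf)^{*}$. It has a left adjoint, namely $(Uf)_{!}=(Uf)\circ(-)$. It is conservative: if $(Uf)^{*}\phi$ is invertible then so is $f^{*}\iota_{Z}\phi=\iota_{Y}(Uf)^{*}\phi$, hence $\iota_{Z}\phi$ is invertible since $f^{*}$ (being monadic) is conservative, hence $\phi$ is invertible since $\iota_{Z}$ is fully faithful. Finally, for a $(Uf)^{*}$-split parallel pair $u,v\colon A\to B$ in $\Ord/Z$, applying the fully faithful $\iota_{Y}$ to the split coequaliser of $(Uf)^{*}u,(Uf)^{*}v$ — which is absolute, hence preserved by $\iota_{Y}$ — exhibits $\iota_{Z}u,\iota_{Z}v$ as an $f^{*}$-split pair; monadicity of $f^{*}$ then produces $q:=\mathrm{coeq}(\iota_{Z}u,\iota_{Z}v)$ with $f^{*}q\cong\iota_{Y}\bigl(\mathrm{coeq}((Uf)^{*}u,(Uf)^{*}v)\bigr)$. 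Applying the left adjoint $r_{Z}$ gives $\mathrm{coeq}(u,v)=r_{Z}q$ in $\Ord/Z$, and $(Uf)^{*}(r_{Z}q)=r_{Y}(f^{*}q)\cong r_{Y}\iota_{Y}\bigl(\mathrm{coeq}((Uf)^{*}u,(Uf)^{*}v)\bigr)\cong\mathrm{coeq}((Uf)^{*}u,(Uf)^{*}v)$, so $(Uf)^{*}$ preserves this coequaliser. By Beck's theorem $(Uf)^{*}$ is monadic, i.e. $Uf$ is effective for descent in $\Ord$.

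The step I expect to require the most care is the bookkeeping of the second paragraph: checking that $r_{Z}\dashv\iota_{Z}$ genuinely exists — that the order inequalities defining $\Ord//X$-morphisms hold strictly on objects coming from $\Ord/Z$ — and that $f^{*}$ interacts with the $\iota$'s and $r$'s as claimed. Both reduce to the single elementary observation that pulling back $(W,c)\to(Z,b)$ along $f$ forces the structure map of the pullback to be exactly $a\pi_{1}$, the summand $c\pi_{2}$ being absorbed because on the fibre product $c\pi_{2}=bf\pi_{1}\geq a\pi_{1}$. Everything after that is a formal application of Beck's monadicity theorem together with the absoluteness of split coequalisers.
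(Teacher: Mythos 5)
Your proof is correct, but it takes a genuinely different route from the paper's. The paper exploits pullback-stability of effective descent morphisms: it pulls $f$ back along $(Z,\bot)\to(Z,b)$ (using the bottom element of the complete lattice $X$) to obtain $f_\bot\colon(Y,\bot)\to(Z,\bot)$, which is again effective for descent, and then observes that $(\Ord//X)/(Z,\bot)\cong\Ord/Z$ --- the inequality $c\leq\bot\circ g$ forces $c=\bot$ --- so that $f_\bot^*$ is literally $(Uf)^*$ and the conclusion is immediate. You instead embed $\Ord/Z$ into $(\Ord//X)/(Z,b)$ as a reflective subcategory via $g\mapsto(W,bg)$ and transfer Beck's conditions (conservativity, and existence plus preservation of coequalisers of split pairs) from $f^*$ to $(Uf)^*$ through the compatibilities $f^*\iota_Z=\iota_Y(Uf)^*$ and $r_Yf^*=(Uf)^*r_Z$; these all check out, the essential computation being that on the fibre product $a\pi_1\wedge bg\pi_2=a\pi_1$ because $g\pi_2=f\pi_1$ and $a\leq bf$. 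Your argument is longer but self-contained modulo B\'enabou--Roubaud and Beck, needs only binary meets in $X$ (for pullbacks) rather than a bottom element, and isolates a reusable structural fact, namely the reflectivity of the strict slice in the lax slice; the paper's argument is shorter but leans on the known pullback-stability of effective descent morphisms. In a sense the two proofs realise $\Ord/Z$ inside the lax comma world in dual ways: the paper via the initial structure $\bot$, you via the largest admissible structure $bg$ over $(Z,b)$.
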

\begin{proof}
With $f\colon(Y,a)\to(Z,b)$ also its pullback $f_\bot$ along $(Z,\bot)\to(Z,b)$
\[\xymatrix{(Y,\bot)\ar[r]^{f_\bot}\ar[d]_1&(Z,\bot)\ar[d]^1\\
(Y,a)\ar[r]_f&(Z,b)}\]
is effective for descent. Observing that the change-of-base functors of $f_\bot$ in $\Ord//X$ and of $Uf_\bot=Uf$ in $\Ord$ are isomorphic:
\[(\xymatrix{(\Ord//X)/(Z,\bot)\ar[rr]^{f_\bot^*}&&(\Ord//X)/(Y,\bot)})\cong(\xymatrix{\Ord/Z\ar[rr]^{(Uf)^*}&&\Ord/Y})\]
we conclude that $Uf$ is effective for descent in $\Ord$, which, thanks to \cite[Proposition 3.4]{JS}, is equivalent to \eqref{eq:ED}.
\end{proof}

To show that \eqref{eq:PED} is sufficient for $f$ to be effective for descent, we will make use of the chain of pullback preserving (faithful) inclusions
\begin{equation}\label{eq:chain}
\xymatrix{\Ord//X\ar[r]^-\Pi&[X^\op,\Ord]\ar[r]&[X^\op,\Rel]\ar[r]&[X^\op,\Gph],}
\end{equation}
where
\begin{itemize}
\item $\Pi(Y,a)\colon X^\op\to\Ord$ is defined by $\Pi(Y,a)(x)=Y_x=\{y\in Y\,;\,x\leq a(y)\}$, $\Pi(Y,a)(x\geq x')$ is the inclusion of $Y_x$ in $Y_{x'}$ and $\Pi(f\colon(Y,a)\to(Z,b))_x$ is the (co)restriction $Y_x\to Z_x$ of $f$,
\item  $\Rel$ is the category having as objects pairs $(X,R_X)$, where $X$ is a set and $R_X\subseteq X\times X$ is a binary relation on $X$, and as morphisms maps preserving the binary relation, while
    \item $\Gph$ is the category of parallel pairs of morphisms $\xymatrix{R\ar@<2pt>[r]^u\ar@<-2pt>[r]_v&X}$, having as morphisms $(f,g)\colon (u,v)\to (u',v')$ pairs of maps $f\colon R\to R'$, $g\colon X\to X'$ such that $u' f=g u$ and $v' f=g v$.
 \end{itemize}
 so that we have a chain of (full) embeddings $\Ord\to\Rel\to\Gph$.

The following theorem, that can be found, for instance, in \cite[pag.~260]{JT} or \cite[Theorem~1.4]{FLND}, is also essential for the proof of our claim.

\begin{theorem}\label{th:obstruction}
Let $\A$ and $\B$ be categories with pullbacks. If $F\colon \A\to\B$ is a
fully faithful pullback preserving functor and $F(f)$ is of effective descent in $\B$, then $f$ is
of effective descent if, and only if, it satisfies the following property: whenever the diagram
below is a pullback in $\B$, there is an object $A$ in $\A$ such that $F(A)\cong B$
\[\xymatrix{F(P)\ar[r]\ar[d]&B\ar[d]\\
F(Y)\ar[r]_{F(f)}&F(Z).}\]
\end{theorem}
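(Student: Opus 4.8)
The plan is to reduce the statement to the familiar comparison between the categories of descent data attached to $f$ and to $F(f)$. Write $\mathrm{Desc}(f)$, $\mathrm{Desc}(F(f))$ for these categories and $K_f\colon\A/Z\to\mathrm{Desc}(f)$, $K_{F(f)}\colon\B/F(Z)\to\mathrm{Desc}(F(f))$ for the comparison functors, so that, by definition, $f$ (resp.\ $F(f)$) is effective for descent precisely when $K_f$ (resp.\ $K_{F(f)}$) is an equivalence; by hypothesis $K_{F(f)}$ is an equivalence. Since $F$ preserves pullbacks it induces functors $\bar F\colon\A/Z\to\B/F(Z)$ and $\widetilde F\colon\mathrm{Desc}(f)\to\mathrm{Desc}(F(f))$ with $\widetilde F\circ K_f=K_{F(f)}\circ\bar F$, and since $F$ is fully faithful both $\bar F$ and $\widetilde F$ are fully faithful (in particular they reflect isomorphisms, as does $F$ itself). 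The one substantive point to isolate first is that $\widetilde F$ is an equivalence onto the full subcategory of $\mathrm{Desc}(F(f))$ spanned by those descent data whose underlying object over $F(Y)$ lies in the essential image of $F$: a descent datum on $F(P)$ is an isomorphism $p_1^*F(P)\to p_2^*F(P)$ over $F(Y\times_ZY)=F(Y)\times_{F(Z)}F(Y)$, and since $F$ preserves the relevant pullbacks and is fully faithful this is $F$ of a unique isomorphism in $\A$, with the cocycle identity transported back because $F$ is faithful.

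Granting this, the equivalence in the statement drops out formally. First, $\widetilde F\circ K_f=K_{F(f)}\circ\bar F$ is fully faithful and $\widetilde F$ is faithful, so $K_f$ is fully faithful; hence $f$ is automatically a descent morphism and ``effective for descent'' amounts to $K_f$ being essentially surjective. For the forward implication, take $D\in\mathrm{Desc}(f)$ with underlying object $P\to Y$; since $K_{F(f)}$ is an equivalence, $\widetilde F(D)\cong K_{F(f)}(B)$ for some $B\to F(Z)$, and unravelling, the underlying object of $K_{F(f)}(B)$ over $F(Y)$ is the pullback $F(Y)\times_{F(Z)}B$, which is therefore isomorphic over $F(Y)$ to $F(P)$ --- that is, we are looking at a pullback square of exactly the displayed shape. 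If $f$ has the stated property, $B\cong F(C)$ for some $C$, necessarily over $Z$ since $F$ is fully faithful, and then $\widetilde F(D)\cong K_{F(f)}(F(C))=\widetilde F(K_f(C))$; as $\widetilde F$ reflects isomorphisms, $D\cong K_f(C)$, so $K_f$ is essentially surjective and $f$ is effective for descent. Conversely, if $f$ is effective for descent and we are given a pullback square of the displayed shape, the canonical descent datum on $F(Y)\times_{F(Z)}B\cong F(P)$ lies (by the substantive point) in the image of $\widetilde F$, say it is $\widetilde F(D)$; since $f$ is effective for descent, $D\cong K_f(C)$ for some $C\in\A/Z$, whence $K_{F(f)}(B)\cong\widetilde F(D)\cong K_{F(f)}(F(C))$, and since $K_{F(f)}$ reflects isomorphisms, $B\cong F(C)$.

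I expect the only real work to be in the bookkeeping of the second and third sentences of the first paragraph: constructing $\widetilde F$ honestly, checking that the square with the comparison functors commutes, and --- the genuinely substantive step --- verifying that a descent datum carried by an object of the form $F(P)$ descends uniquely to a descent datum on $P$. All of this uses nothing beyond ``$F$ preserves pullbacks'' together with ``$F$ fully faithful implies $F$ reflects isomorphisms''; the rest is formal juggling of equivalences. (One could equally run the argument through the monadicity characterisation of effective descent, with $f^*\colon\A/Z\to\A/Y$; I would nonetheless prefer the descent-data formulation, since it keeps the obstruction object $B$ explicitly in view and matches the statement verbatim.)
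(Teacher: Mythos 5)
The paper does not prove this theorem; it is quoted from the literature (Janelidze--Tholen, \emph{Facets of descent I}, p.~260, and Lucatelli Nunes, \emph{Pseudo-Kan extensions and descent theory}, Theorem~1.4). Your argument is correct and is essentially the standard descent-data proof given in those references: the key observation that $\widetilde F$ restricts to an equivalence onto the descent data whose underlying object lies in the essential image of $F$ (because $F$ is fully faithful and preserves the kernel-pair pullbacks, so the gluing isomorphism and the cocycle condition transport back uniquely) is exactly the substantive step there, and the remaining verifications you defer are indeed routine. The only stylistic caveat is that where you invoke ``$\widetilde F$ reflects isomorphisms'' you really need the standard combination ``fully faithful, hence $\widetilde F(D)\cong\widetilde F(D')$ implies $D\cong D'$'', which you do have available.
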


Indeed, using \eqref{eq:chain} we will show that in $[X^\op,\Ord]$ a natural transformation is effective for descent if, and only if, it is pointwise effective for descent in $\Ord$, and that $f$ is effective for descent in $\Ord//X$ provided that $\Pi f$ is effective for descent in $[X^\op,\Ord]$.

\begin{proposition}
In $[X^\op,\Gph]$ a morphism $\alpha\colon F\to G$ is effective for descent if, and only if, it is an epimorphism.
\end{proposition}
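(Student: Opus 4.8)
The plan is to recognise $[X^\op,\Gph]$ as a presheaf category, in which the statement becomes the well known coincidence of effective descent morphisms with epimorphisms. Writing $\Gph\cong[\mathcal G,\Set]$ for the small category $\mathcal G$ with two objects and two non-identity arrows $E\rightrightarrows V$ (source and target of an edge), and using that the (pre)ordered set $X$ is in particular a small category, we obtain $[X^\op,\Gph]\cong[X^\op\times\mathcal G,\Set]$. Hence $\mathcal E:=[X^\op,\Gph]$ is a Grothendieck topos; in particular it is (Barr-)exact and locally cartesian closed, every epimorphism in it is a pullback-stable regular epimorphism, and, since colimits are computed pointwise, an epimorphism $\alpha\colon F\to G$ is precisely a morphism that is surjective on vertices and on edges at each $x\in X$.

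For the direction ``effective for descent $\Rightarrow$ epimorphism'', the soft one, I would just recall that an effective descent morphism is a descent morphism, and a descent morphism is a pullback-stable regular epimorphism, hence an epimorphism; see \cite{JT}. For the converse I would show that every epimorphism $\alpha$ of $\mathcal E$ is of effective descent, i.e.\ that $\alpha^{*}\colon\mathcal E/G\to\mathcal E/F$ is monadic. Now $\alpha^{*}$ has a left adjoint $\Sigma_\alpha$ (postcomposition with $\alpha$) and, by local cartesian closedness, a right adjoint $\Pi_\alpha$; being itself a left adjoint it preserves coequalisers, and $\mathcal E/G$ is cocomplete, so Beck's monadicity theorem reduces the problem to checking that $\alpha^{*}$ is conservative.

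The conservativity is the crux, and the one place where the hypothesis enters. Via the (epi, mono) factorisations of the topos $\mathcal E$, reflecting isomorphisms of morphisms over $G$ reduces to reflecting, for a subobject $m\colon S\hookrightarrow A$ of an object over $G$, the equality $S=A$ from $\alpha^{*}S=\alpha^{*}A$; and this is forced by $\alpha$ being epic, since the pullbacks $F\times_G S\to S$ and $F\times_G A\to A$ of $\alpha$ are epimorphisms, so $F\times_G S=F\times_G A$ makes $m$ epic, hence — being monic — an isomorphism. A more hands-on alternative, closer to the computations needed further up the chain \eqref{eq:chain}, is to describe a descent datum along $\alpha$ explicitly: evaluated at each $x\in X$ and separately on vertices and on edges it is a set over each fibre of $\alpha_x$ together with a coherent family of transition bijections, which the pointwise surjectivity of $\alpha$ lets one glue into an object of $\mathcal E/G$, with naturality in $X^\op$ and compatibility with source and target automatic because everything is computed componentwise in $\Set$. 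In either approach the main obstacle is to see that the naturality constraints over $X^\op$ and the graph structure add no obstruction on top of the classical fact that surjections of sets are of effective descent — they do not, since all the relevant (co)limits in $\mathcal E$ are pointwise. With the Proposition in place, Theorem~\ref{th:obstruction} is what will let us propagate the conclusion back along \eqref{eq:chain}.
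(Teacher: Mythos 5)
Your proposal is correct and takes essentially the same route as the paper, whose entire proof is the one-line observation that $[X^\op,\Gph]$ is a topos, in which effective descent morphisms are precisely the epimorphisms. You simply unpack that classical fact (presheaf topos, epi $=$ stable regular epi, crude monadicity plus conservativity of $\alpha^{*}$) which the paper leaves implicit.
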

\begin{proof}
The category $[X^\op,\Gph]$ is a topos.
\end{proof}

\begin{proposition}
For a morphism $\alpha\colon F\to G$ in $[X^\op,\Rel]$, the following conditions are equivalent:
\begin{tfae}
\item $\alpha$ is effective for descent;
\item $\alpha$ is a stable regular epimorphism;
\item $\alpha$ is a regular epimorphism;
\item $(\forall x\in X)\;\alpha_x$ is a regular epimorphism in $\Rel$;
\item $(\forall x\in X)\;(\forall (z_0,z_1)\in G(x))\;(\exists (y_0,y_1)\in F(x))\;:\;\alpha_x(y_0,y_1)=(z_0,z_1)$;
\item $(\forall x\in X)\;\alpha_x$ is effective for descent in $\Rel$.
\end{tfae}
\end{proposition}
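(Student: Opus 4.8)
The plan is to establish the cycle of implications (i) $\To$ (ii) $\To$ (iii) $\To$ (iv) $\To$ (v) $\To$ (vi) $\To$ (i), with the bulk of the work concentrated in moving between the pointwise conditions and the conditions on $\alpha$ itself. Since $[X^{\op},\Rel]$ is a functor category, limits and colimits are computed pointwise; in particular a morphism is a (stable) regular epimorphism, or a monomorphism, precisely when it is so at each $x\in X$. This immediately disposes of (iii) $\Leftrightarrow$ (iv), and the equivalence (iv) $\Leftrightarrow$ (v) is just the concrete description of regular epimorphisms in $\Rel$: a morphism of relations $\alpha_x\colon F(x)\to G(x)$ is a regular epi iff the underlying map of vertex sets is surjective and every related pair downstairs is the image of a related pair upstairs, which is exactly the displayed condition in (v).

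For the descent half I would first recall that effective descent morphisms are always stable regular epimorphisms, giving (i) $\To$ (ii) for free, and (ii) $\To$ (iii) is trivial. The essential point is (iv) $\Leftrightarrow$ (vi), i.e.\ that in $\Rel$ a morphism is effective for descent exactly when it is a regular epimorphism. I would invoke (or reprove) the known characterisation of effective descent morphisms in $\Rel$: $\Rel$ is a quasitopos-like setting where the descent condition for a morphism reduces to surjectivity on vertices together with the ``no missing edges'' condition, since the relation structure imposes no further gluing constraints beyond what a surjection already guarantees; concretely one checks that for a regular epi $\alpha_x$ every descent datum over $G(x)$ — a relation on the fibres compatible along the kernel pair — descends, because a relation is determined by which pairs it contains and the kernel-pair compatibility forces exactly the pushed-forward relation. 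This is the analogue, one categorical level down, of the proposition just proved for $[X^{\op},\Gph]$, and I expect the cleanest route is to cite the reference already used for graphs, adapted to $\Rel$, or to note that $\Rel$ is itself (equivalent to) a slice/comma-type category for which effective descent $=$ pullback-stable regular epi is classical.

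It then remains to promote the pointwise statement (vi) to the global statement (i). For this I would use that the inclusion $[X^{\op},\Rel]\hookrightarrow[X^{\op},\Gph]$ of the previous line in the chain \eqref{eq:chain} is fully faithful and pullback preserving, together with Theorem~\ref{th:obstruction}: given $\alpha$ with $\alpha_x$ effective for descent in $\Rel$ for all $x$, one shows $\alpha$ is effective for descent in $[X^{\op},\Gph]$ (it is epic, hence by the graph proposition effective there) — wait, more directly: the descent category of $\alpha$ in $[X^{\op},\Rel]$ is computed pointwise from the descent categories of the $\alpha_x$, and the equivalences $\mathsf{Desc}(\alpha_x)\simeq \Rel/G(x)$ assemble, by naturality of the comparison functors, into an equivalence $\mathsf{Desc}(\alpha)\simeq [X^{\op},\Rel]/G$; this is the content of effective descent being a ``pointwise'' property in a functor category whose base is a (thin) category. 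The main obstacle I anticipate is exactly this last assembly step — checking that the pointwise descent equivalences are natural in $x$ and that the naturality squares do the bookkeeping needed to identify the glued descent category with the slice — but since $X$ is thin the coherence data is minimal, and the argument parallels the one sketched right after Theorem~\ref{th:obstruction} for lifting descent from $[X^{\op},\Gph]$ to $\Ord//X$. This closes the cycle.
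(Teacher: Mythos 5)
Your decomposition of the easy implications is fine and matches the paper: (i) $\To$ (ii) $\To$ (iii) are general facts about effective descent, (iii) $\Leftrightarrow$ (iv) is the pointwise computation of (co)limits in a functor category, and (iv) $\Leftrightarrow$ (v) $\Leftrightarrow$ (vi) is exactly the content of Propositions 2.1 and 3.3 of Janelidze--Sobral \cite{JS}, which is what the paper cites; your attempted re-derivation of ``regular epi $=$ effective descent in $\Rel$'' is best replaced by that citation ($\Rel$ is not a slice of a topos in any evident way, so the ``classical slice/comma'' justification does not stand on its own).

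The gap is in the step that carries all the weight, namely getting from the pointwise conditions back to (i). You begin with the correct strategy --- apply Theorem \ref{th:obstruction} to the fully faithful, pullback-preserving inclusion $[X^\op,\Rel]\to[X^\op,\Gph]$, using that a pointwise regular epimorphism is an epimorphism in the topos $[X^\op,\Gph]$ and hence effective for descent there by the preceding proposition --- but you abandon it mid-sentence (``wait, more directly'') without ever verifying the one condition that theorem actually requires: that whenever a pullback of $\alpha$ formed in $[X^\op,\Gph]$ has its apex in (the image of) $[X^\op,\Rel]$, the fourth vertex also lies in $[X^\op,\Rel]$; this is where pointwise surjectivity on vertices and edges (pullback-stability of regular epis) is used to transport reflexivity downwards. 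What you substitute instead --- that ``the descent category of $\alpha$ is computed pointwise'' and that the equivalences $\mathsf{Desc}(\alpha_x)\simeq\Rel/G(x)$ ``assemble, by naturality, into $\mathsf{Desc}(\alpha)\simeq[X^\op,\Rel]/G$'' --- is precisely the statement to be proved, not an argument for it. Neither $[X^\op,\Rel]/G$ nor $\mathsf{Desc}(\alpha)$ is a product of the pointwise categories: both consist of families equipped with transition maps indexed by $X^\op$, so one must construct the transition morphisms of a descended object and check essential surjectivity of the comparison globally; thinness of $X$ kills coherence cells but not this obstruction. Indeed, if ``descent is pointwise in functor categories'' were available for free, the paper's chain \eqref{eq:chain} and Theorem \ref{th:obstruction} would be unnecessary altogether, including for the harder case of $[X^\op,\Ord]$ treated next. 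To repair the proof, return to your first instinct and finish the obstruction-theorem argument.
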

\begin{proof}
Applying Theorem \ref{th:obstruction} for the inclusion $[X^\op,\Rel]\to [X^\op,\Gph]$, and knowing that pullbacks in $[X^\op,\Rel]$ are formed pointwise and regular epimorphisms are pullback stable, one concludes that (i)$\Leftrightarrow$(ii)$\Leftrightarrow$(iii)$\Leftrightarrow$(iv). The characterizations of regular epimorphisms and effective descent morphisms in $\Rel$ of \cite[Propositions 2.1 and 3.3]{JS} give (iv)$\Leftrightarrow$(v)$\Leftrightarrow$(vi).
\end{proof}

\begin{theorem}
In $[X^\op,\Ord]$ a morphism $\alpha\colon F\to G$ is effective for descent if, and only if,
\begin{equation}\label{eq:ped}
(\forall x\in X)\;\alpha_x\mbox{ is effective for descent in $\Ord$.}
\end{equation}
\end{theorem}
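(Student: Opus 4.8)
The plan is to derive the equivalence from the previous Proposition together with the obstruction criterion of Theorem~\ref{th:obstruction}, applied to the fully faithful, pullback-preserving inclusion $J\colon[X^\op,\Ord]\hookrightarrow[X^\op,\Rel]$ of \eqref{eq:chain}. Two preliminary facts are worth isolating. First, by \cite[Propositions~2.1 and 3.3]{JS}, a monotone map between ordered sets is effective for descent in $\Rel$ exactly when it is a stable regular epimorphism in $\Ord$, i.e.\ when every pair $z_0\leq z_1$ in its codomain is lifted by a pair $y_0\leq y_1$ in its domain. Second, a pullback of ordered sets carries the componentwise order, so $\Ord$ (resp.\ $[X^\op,\Ord]$) is closed under pullbacks in $\Rel$ (resp.\ in $[X^\op,\Rel]$, where pullbacks are pointwise); hence for $\alpha\colon F\to G$ in $[X^\op,\Ord]$ and $\beta\colon B\to G$ in $[X^\op,\Rel]$, the pullback $P$ of $\beta$ along $\alpha$ satisfies $P(x)=F(x)\times_{G(x)}B(x)$ in $\Rel$ for every $x$.

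For the implication ``pointwise effective descent $\Rightarrow$ effective descent'', I would argue as follows. Assuming each $\alpha_x$ satisfies \eqref{eq:ED}, instantiation at constant chains and at chains $z_0\leq z_1\leq z_1$ shows each $\alpha_x$ is effective for descent in $\Rel$, so by the previous Proposition $J\alpha$ is effective for descent in $[X^\op,\Rel]$. By Theorem~\ref{th:obstruction} it then suffices to check: if $\beta\colon B\to G$ in $[X^\op,\Rel]$ has its pullback $P$ along $\alpha$ lying in $[X^\op,\Ord]$---equivalently, $P$ is pointwise an order---then $B$ lies in $[X^\op,\Ord]$. This I would verify pointwise. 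For reflexivity of $B(x)$: given $b\in B(x)$, surjectivity of $\alpha_x$ gives $y$ with $\alpha_x(y)=\beta_x(b)$, so $(y,b)\in P(x)$ and reflexivity of $P(x)$ yields $(b,b)\in B(x)$. For transitivity: from $(b_0,b_1),(b_1,b_2)\in B(x)$ one gets the chain $\beta_x(b_0)\leq\beta_x(b_1)\leq\beta_x(b_2)$ in $G(x)$, which by \eqref{eq:ED} for $\alpha_x$ lifts to $y_0\leq y_1\leq y_2$ in $F(x)$ with $\alpha_x(y_i)=\beta_x(b_i)$; then $(y_i,b_i)\in P(x)$ and $(y_0,b_0)\leq(y_1,b_1)\leq(y_2,b_2)$ in $P(x)$, whence $(y_0,b_0)\leq(y_2,b_2)$ and so $(b_0,b_2)\in B(x)$.

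For the converse, suppose $\alpha$ is effective for descent in $[X^\op,\Ord]$. Since effective descent morphisms are pullback-stable regular epimorphisms, and both regular epimorphisms and pullbacks in $[X^\op,\Ord]$ are computed pointwise, each $\alpha_x$ is a stable regular epimorphism in $\Ord$, hence effective for descent in $\Rel$; therefore $J\alpha$ is effective for descent in $[X^\op,\Rel]$, and Theorem~\ref{th:obstruction} tells us the obstruction for $\alpha$ holds. Now fix $x_0\in X$. As $\alpha_{x_0}$ is effective for descent in $\Rel$, Theorem~\ref{th:obstruction} applied to $\Ord\hookrightarrow\Rel$ reduces ``$\alpha_{x_0}$ effective for descent in $\Ord$'' to: every $\beta_0\colon B_0\to G(x_0)$ in $\Rel$ whose pullback $F(x_0)\times_{G(x_0)}B_0$ is an order has $B_0$ an order. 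Given such a $\beta_0$, I would extend it to a morphism $\beta\colon B\to G$ in $[X^\op,\Rel]$ with $B(x_0)=B_0$ and $\beta_{x_0}=\beta_0$ by setting $B(x')=\emptyset$ for $x'\not\leq x_0$; $B(x')=B_0$, transported along the isomorphism $G(x_0)\cong G(x')$, for $x'$ with $x'\leq x_0\leq x'$; and---this is the crucial choice---$B(x')=$ the full suborder of $G(x')$ on the image $\{G(x_0\geq x')(\beta_0(b)):b\in B_0\}$ when $x'\leq x_0$ but $x_0\not\leq x'$, with $\beta_{x'}$ the corresponding inclusion and restriction maps induced from $G$. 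One checks this defines a presheaf on $X^\op$. Because a suborder is transitive, $F(x')\times_{G(x')}B(x')$ is an order whenever $x_0\not\leq x'$; together with the empty order and $F(x_0)\times_{G(x_0)}B_0$ (an order by hypothesis), this makes $P=F\times_G B$ pointwise an order, so the obstruction for $\alpha$ forces $B$ pointwise an order, and in particular $B_0=B(x_0)$ is an order.

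The routine verifications---full faithfulness and pullback-preservation of the inclusions, the ``closed under pullbacks'' claims, functoriality of $B$---are straightforward, and the first half is essentially a short computation. The main obstacle is the construction of the witness $B$ in the converse: the obvious attempt, left Kan extending $\beta_0$ along $x_0$, would place $B_0$ at every level below $x_0$, and a non-transitive $B_0$ would then wreck the order structure of the pullback there, leaving Theorem~\ref{th:obstruction} inapplicable; substituting the $\beta_0$-image suborder of $G$ at the strictly lower levels cures this while leaving the fibre over $x_0$ untouched, which is exactly what is needed. (Alternatively one might note that $\ev_{x_0}\colon[X^\op,\Ord]\to\Ord$ has fully faithful left and right adjoints and deduce that it preserves effective descent morphisms, but the route above stays inside the framework already developed.)
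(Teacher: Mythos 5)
Your proof is correct, and its skeleton coincides with the paper's: both directions run through Theorem~\ref{th:obstruction} applied to the fully faithful pullback-preserving inclusion $[X^\op,\Ord]\hookrightarrow[X^\op,\Rel]$, combined with the preceding Proposition's pointwise characterization of effective descent in $[X^\op,\Rel]$. The forward implication is essentially identical (you verify the obstruction pointwise by lifting chains by hand, where the paper invokes the pointwise obstruction criterion for $\Ord\hookrightarrow\Rel$; these are the same computation). The converse is where you genuinely diverge. The paper targets the Janelidze--Sobral characterization \eqref{eq:ED} directly: given a chain $z_0\leq z_1\leq z_2$ in $G(x)$, it builds a specific three-element, non-transitive presheaf $H$ concentrated at $x$ (with a single reflexive point strictly below $x$ and $\emptyset$ elsewhere) and reads the lifted chain off the forced failure of transitivity in $F\times_G H$. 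You instead apply Theorem~\ref{th:obstruction} a second time, to $\Ord\hookrightarrow\Rel$ at each $x_0$, and extend an \emph{arbitrary} $\beta_0\colon B_0\to G(x_0)$ to a presheaf by placing the $\beta_0$-image suborder of $G$ at the strictly lower levels. Both witnesses exploit the same trick (something order-safe below $x_0$, empty elsewhere), but your route never needs the explicit description of effective descent morphisms of $\Ord$, which makes it more uniform, at the price of the extra bookkeeping in checking functoriality of $B$; the paper's concrete $H$ is shorter and yields the lifting condition \eqref{eq:ED} as a by-product. One point you state but do not justify, and which the paper also glosses over: to apply Theorem~\ref{th:obstruction} in the converse you need $J\alpha$ to be effective for descent in $[X^\op,\Rel]$ beforehand; your deduction that each $\alpha_x$ is a \emph{stable} regular epimorphism in $\Ord$ requires knowing that every monotone map $W\to G(x)$ is the $x$-component of a natural transformation into $G$ (e.g.\ via the copower of the representable at $x$), not merely that pullbacks and regular epimorphisms are pointwise. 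This is standard and easily repaired, so it is a presentational gap rather than a mathematical one.
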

\begin{proof}
Now we apply Theorem \ref{th:obstruction} to the full inclusion $[X^\op,\Ord]\to [X^\op,\Rel]$. Since it preserves pullbacks, to prove that $\alpha\colon F\to G$ satisfying \eqref{eq:ped} is effective for descent in $[X^\op,\Ord]$ it is sufficient to show that in the pullback diagram
\[\xymatrix{F\times_GH\ar[r]^-{\rho}\ar[d]_{\pi}&H\ar[d]^\beta\\
F\ar[r]_\alpha&G}\]
if $F\times_GH$ belongs to $[X^\op,\Ord]$, then also $H$ does. For each $x\in X$, consider the pullback diagram
\[\xymatrix{F(x)\times_{G(x)}H(x)\ar[r]^-{\rho_x}\ar[d]_{\pi_x}&H(x)\ar[d]^{\beta(x)}\\
F(x)\ar[r]_{\alpha_x}&G(x).}\]
If $\alpha_x$ is effective for descent in $\Ord$, then $H(x)\in\Ord$ since $F(x)\times_{G(x)}H(x)$ does by assumption.

Conversely, let us assume that $\alpha$ is effective for descent, and let $x\in X$ and $z_0\leq z_1\leq z_2$ in $G(x)$. Consider the functor $H\colon X^\op\to\Rel$ defined by
\[H(x')=\left\{\begin{array}{ll}
(\{z_0,z_1,z_2\},\{(z_0,z_0), (z_1,z_1), (z_2,z_2), (z_0,z_1),(z_1,z_2)\})&\mbox{ if }x'\cong x\\
(\{z_0\},\{(z_0,z_0)\})&\mbox{ if }x'<x\\
\emptyset&\mbox{ otherwise},
\end{array}\right.\]
with $H(x''\geq x')\colon H(x'')\to H(x')$ given by $\emptyset\to H(x')$ if $x''\not\leq x$, the constant map $H(x'')\to H(x')$ if $x'<x$ and $x''\leq x$, and the identity otherwise. Since by assumption $\alpha$ is effective for descent and $H$ does not belong to $[X^\op,\Ord]$ ($H(x)$ is not transitive), also $F\times_GH$ does not belong to $[X^\op,\Ord]$. If $x'\not\cong x$, then $F(x')\times_{G(x')}H(x')$ is either $\emptyset$ or isomorphic to $F(x')$, hence an ordered set. Therefore there must exist $x'\cong x$ (and so we may consider $x'=x$ since images of isomorphic elements will be isomorphic too) so that  the binary reflexive relation in $F(x)\times_{G(x)}H(x)$, that is,
 \[\{((y,y),(z_i,z_i));\, \alpha_x(y)=z_i,\,i=0,1,2\}\,\cup\,\{((y,y'),(z_i,z_{i+1}));\,\alpha_x(y,y')=(z_i,z_{i+1}),\,i=0,1\}
\]
is not an order. Failure of transitivity at $F(x)\times_{G(x)}H(x)$ means that, necessarily, there exist $((y,y'),(z_0,z_1))$ and $((y',y''),(z_1,z_2))$ in $F(x)\times_{G(x)}H(x)$; then $\pi_x(y,y')$ and $\pi_x(y',y'')$ gives that $y\leq y'\leq y''$ in $F(x)$. Computing now $\alpha_x$ gives $\alpha_x(y)=z_0$, $\alpha_x(y')=z_1$ and $\alpha_x(y'')=z_2$.
\end{proof}

\begin{lemma}
Let $\Pi\colon\Ord//X\to[X^\op, \Ord]$ the functor defined in \eqref{eq:chain}. Then $\Pi$ is a full and faithful right adjoint.
\end{lemma}
\begin{proof}
For each functor $H\colon X^\op\to\Ord$ and $x\in X$, let $H[x]=H(x\geq\bot)(H(x))\subseteq H(\bot)$.

The left adjoint $L\colon [X^\op,\Ord]\to\Ord//X$ of $\Pi$ is defined by $L(H)=(H(\bot),d_H)$ where $d_H\colon H(\bot)\to X$ is defined by $d_H(w)=\bigvee\{x\in X\,;\,w\in\,\uparrow H[x]\}$, and $L(\alpha\colon H\to H')=\alpha_\bot$. Indeed, $d_H$ is clearly a monotone map; moreover, if $\alpha\colon H\to H'$ is a natural transformation, then $\alpha_\bot\colon H(\bot)\to H'(\bot)$ is monotone; to show that
\[\xymatrix{H(\bot)\ar@{}[rrd]|{\leq}\ar[rd]_{d_H}\ar[rr]^{\alpha_\bot}&&H'(\bot)\ar[ld]^{d_{H'}}\\
&X&}\]
let $w\leq w'\in H[x]$; then, since the following diagram commutes
\[\xymatrix{H(x)\ar[r]^{\alpha_x}\ar[d]_{H(x\geq\bot)}&H'(x)\ar[d]^{H'(x\geq\bot)}\\
H(\bot)\ar[r]_{\alpha_\bot}&H'(\bot)}\]
$\alpha_\bot(w)\leq \alpha_\bot(w')\in H'[x]$, and so $d_H(w)\leq d_{H'}(\alpha_\bot(w))$.

To show that $L\dashv \Pi$ we define the counit of the adjunction: given $(Y,a)\in\Ord//X$, $L\Pi(Y,a)=(Y,a)$ because $L(\bot)=Y_\bot=Y$ and $d_{L\Pi(Y,a)}=\bigvee\{x\in X\,;\,y\geq y'\in Y_x\}=a(y)$, since $y\in Y_{a(y)}$. Let us show that the identity $L\Pi(Y,a)\to(Y,a)$ has the required universal property: if $H\colon X^\op\to\Ord$ is a functor and $f\colon(H(\bot),d_H)\to (Y,a)$ is a morphism in $\Ord//X$, then $\varphi\colon H\to\Pi(Y,a)$, defined for every $x\in X$ by $\varphi_x\colon H(x)\to Y_x$ with $\varphi_x(w)=f(H(x\geq\bot)(x))$, is easily checked to be the unique natural transformation such that $L\varphi=f$.

Therefore $\Pi$ is a right adjoint and, moreover, it is full and faithful since the counit is an isomorphism.
\end{proof}

\begin{theorem}
If $f\colon(Y,a)\to(Z,b)$ is a morphism in $\Ord//X$ satisfying \eqref{eq:PED}, that is,
\[(\forall z_0\leq z_1\leq z_2\mbox{ in }Z)\;(\exists y_0\leq y_1\leq y_2\mbox{ in }Y)\;:\;f(y_i)=z_i\;(i=0,1,2)\mbox{ and }a(y_0)=b(z_0),\]
then $f$ is effective for descent in $\Ord//X$.
\end{theorem}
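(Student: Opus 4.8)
The plan is to apply the obstruction criterion of Theorem~\ref{th:obstruction} to the fully faithful, pullback-preserving functor $\Pi\colon\Ord//X\to[X^\op,\Ord]$ of~\eqref{eq:chain} (that $\Pi$ is fully faithful is routine --- a natural transformation $\Pi(Y,a)\to\Pi(Z,b)$ is determined by, and corresponds to, its component at $\bot\in X$). Two things then have to be shown: that $\Pi f$ is effective for descent in $[X^\op,\Ord]$; and that, whenever we are given a pullback square in $[X^\op,\Ord]$ with bottom edge $\Pi f\colon\Pi(Y,a)\to\Pi(Z,b)$, right edge some $\beta\colon H\to\Pi(Z,b)$, and top-left corner of the form $\Pi(Y',a')$ for an object $(Y',a')$ of $\Ord//X$, the functor $H$ is itself isomorphic to $\Pi(W,c)$ for some object $(W,c)$ of $\Ord//X$.

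For the first point, by the previous Theorem it is enough to show that $(\Pi f)_x\colon Y_x\to Z_x$ is effective for descent in $\Ord$ for every $x\in X$, i.e.\ (by \cite[Proposition~3.4]{JS}) that every chain $z_0\leq z_1\leq z_2$ in $Z_x$ lifts through $(\Pi f)_x$ to a chain $y_0\leq y_1\leq y_2$ in $Y_x$. Such a chain is in particular a chain in $Z$, so~\eqref{eq:PED} produces $y_0\leq y_1\leq y_2$ in $Y$ with $f(y_i)=z_i$ and $a(y_0)=b(z_0)$; since $z_0\in Z_x$ we get $a(y_0)=b(z_0)\geq x$, and monotonicity of $a$ forces $a(y_i)\geq a(y_0)\geq x$, so the whole chain lies in $Y_x$, as wanted.

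For the second point, one first identifies the essential image of $\Pi$: a functor $F\colon X^\op\to\Ord$ lies in it exactly when each transition map of $F$ is, up to isomorphism, the inclusion of an up-set and $F$ carries each join $\bigvee_i x_i$ in $X$ to the intersection $\bigcap_i F(x_i)$ inside $F(\bot)$; in that case one recovers $W=F(\bot)$ and $c(w)=\bigvee\{x\in X\,;\,w\in F(x)\}$. It then remains to verify these two properties for $H$. The recurring device is the following. If $z\in Z$ satisfies $b(z)\geq x$ --- in practice $z=\beta_x(h)$ for some $h\in H(x)$, and then $z\in Z_x$ since the transition maps of $\Pi(Z,b)$ are injective --- then applying~\eqref{eq:PED} to the constant chain $z\leq z\leq z$, or to a suitable chain of the form $z_0\leq z_1\leq z_1$ or $z_0\leq z_0\leq z_2$ when comparing two elements, yields $y\in Y$ with $f(y)=z$ and $a(y)=b(z)\geq x$, hence $y\in Y_x$; pairs such as $(y,h)$ then lie in the $x$-component $Y_x\times_{Z_x}H(x)$ of the pullback $\Pi(Y',a')$. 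From this one reads off injectivity, order-reflection, upward-closedness of images and preservation of intersections for the transition maps of $H$, using the corresponding (known) properties of $\Pi(Y',a')$ and transporting across the given isomorphism. This places $H$ in the essential image of $\Pi$, and the proof is complete.

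I expect the main obstacle to be this last part: pinning down the essential image of $\Pi$ --- the upward-closedness of transition maps in particular --- and running the element-chases that transfer the upward-closedness and intersection-preservation properties from $\Pi(Y',a')$ to $H$. It is precisely here that one needs the equality $a(y_0)=b(z_0)$ in~\eqref{eq:PED}, rather than a mere inequality, in order to keep the elements produced by~\eqref{eq:PED} inside the relevant $Y_x$.
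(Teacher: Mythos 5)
Your proposal is correct and follows essentially the same route as the paper: apply Theorem~\ref{th:obstruction} to $\Pi$, check that $\Pi f$ is pointwise effective for descent in $[X^\op,\Ord]$ (a step the paper leaves implicit), and show the descent datum $H$ lies in the essential image of $\Pi$ via the same element-chases (injectivity and upward-closedness of the transition maps, recovering $c(w)=\bigvee\{x\in X\,;\,w\in H(x)\}$), correctly locating where the equality $a(y_0)=b(z_0)$ is used. The only differences are presentational: the paper constructs $(W,c)$ directly rather than first characterizing the essential image of $\Pi$, and it carries out the final chases that you leave as a plan.
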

\begin{proof}
Let $f\colon(Y,a)\to(Z,b)$ satisfy the condition above. Since $\Pi$ is full, faithful and preserves pullbacks, applying Theorem \ref{th:obstruction} what we need to show is that, given a pullback diagram in $[X^\op,\Ord]$
\[\xymatrix{\Pi(P,c)\ar[r]^\rho\ar[d]_{\Pi\pi}&G\ar[d]^\beta\\
\Pi(Y,a)\ar[r]_{\Pi f}&\Pi(Z,b)}\]
$G\cong \Pi(W,d)$ for some $d\colon W\to X$ in $\Ord//X$.

First we show that, for every $x\in X$, $G(x\geq\bot)\colon G(x)\to G(\bot)$ is an injective map:
\[\xymatrix{P_\bot\ar[rrr]^{\rho_\bot}\ar[ddd]_{\pi_\bot}&&&G(\bot)\ar[ddd]^{\beta_\bot}\\
&P_x\ar[r]^{\rho_x}\ar[lu]\ar[d]_{\pi_x}&G(x)\ar[ru]\ar[d]^{\beta_x}\\
&Y_x\ar[r]_{f_x}\ar[ld]&Z_x\ar[rd]\\
Y_\bot\ar[rrr]_{f_\bot}&&&Z_\bot.}\]
Indeed, if $w_1,w_2\in G(x)$ are such that $G(x\geq\bot)(w_1)=G(x\geq\bot)(w_2)=w$, then $\beta_x(w_1)=\beta_x(w_2)$. Let $y\in Y_x$ be such that $f_x(y)=\beta_x(w_1)$. Then $(y,w_1)$ and $(y,w_2)$ belong to $P_x$, hence they also belong to $P_\bot=P$, with $\rho_\bot(y,w_1)=\rho_\bot(y,w_2)=w$, $\pi_\bot(y,w_1)=\pi_\bot(y,w_2)=y$; hence $w_1=w_2$.
Therefore also the maps $G(x'\geq x)\colon G(x')\to G(x)$ are injective, and so we may assume they are inclusions.

Now we consider $W=G(\bot)$ and define $d\colon W\to X$ by
\[d(w)=\bigvee\{x\in X\,;\,w\in G(x)\}.\]
Then:
\begin{itemize}
\item[--] $w\in G(d(w))$: if $z=\beta_\bot(w)$, then, for all $x\in X$, if $w\in G(x)$ then $z\in Z_x$, i.e. $x\leq b(z)$; hence $d(w)\leq b(z)$, and so $Z\in Z_{d(w)}$. Let $y\in Y_{d(w)}$ be such that $f(y)=z$. Then, for all $x\in X$, if $w\in G(x)$ then $(y,w)\in P_x$, or, equivalently, $x\leq c(y,w)$, which implies $d(w)\leq c(y,w)$. Hence $w\in G(c(y,w))\subseteq G(d(w))$.
\item[--] $d$ is monotone: it follows from the fact that, for each $x\in X$, $G(x)$ is upwards-closed; indeed, if $w\leq w'$ in $W$ and $w\in G(x)$, then $\beta_\bot(w)\leq\beta_\bot(w')$ and both belong to $Z_x$. Let $y\leq y'$ in $Y_x$ be such that $f(y)=\beta_\bot(w)$ and $f(y')=\beta_\bot(w')$. Then $(y,w)\leq (y',w')$ in $P$ and $(y,w)\in P_x$ implies $(y',w')\in P_x$, since $P_x$ is upwards-closed. This gives $w'\in G(x)$ as claimed.
\end{itemize}
\end{proof}

\begin{remark}
As we pointed out at the beginning of this section, $Uf$ effective for descent in $\Ord$ does not imply $f\colon(Y,a)\to(Z,b)$ effective for descent in $\Ord//X$, since it does not even imply that $f$ is a regular epimorphism in $\Ord//X$. \emph{It is an open problem to know whether every stable regular epimorphism $f$ with $Uf$ effective for descent in $\Ord$ is effective for descent in $\Ord//X$.}
\end{remark}

\section*{Acknowledgments}
We thank the anonymous referee for their valuable feedback and contributions, which have helped improve our manuscript.

\end{document}